\newtheorem{theorem}{Theorem}
\newtheorem{lemma}[theorem]{Lemma}
\newtheorem{remark}{Remark}
\newtheorem{definition}[theorem]{Definition}
\newcommand{\R}{{\mathbb{R}}}
\newcommand{\Z}{{\mathbb{Z}}}
\def\be{\begin{equation}}
\def\ee{\end{equation}}
\def\ba{\begin{array}}
\def\ea{\end{array}}
\def\vp{\varphi}
\newcommand{\spliced}[3]{#1\bowtie_{#2}\!#3}
\newcommand{\norm}[1]{\|{#1}\|}
\newcommand{\Dt}{[0, +\infty) \times D}
\newcommand{\U}{\mathfrak{u}}
\title{Semiflow selection and Markov selection theorems}
\author{Jorge E. Cardona and Lev Kapitanski}
\affil{
Department of Mathematics, University of Miami, Coral Gables, FL 33124, USA}
\date{}
\providecommand{\keywords}[1]{\textit{Keywords:} #1}
\begin{document}

\maketitle


\begin{abstract}
The deterministic analog of the Markov property of a time-homogeneous Markov process is the semigroup property
of solutions of an autonomous differential equation. The semigroup property arises naturally when
the solutions of a differential equation are unique, and leads to a semiflow. We prove an abstract
result on measurable selection of a semiflow for the situations without uniqueness.  We outline applications to ODEs, PDEs, differential inclusions, etc. Our proof of the semiflow selection
theorem is motivated by N. V. Krylov's Markov selection theorem. To accentuate this connection,
we include a new version of the Markov selection theorem related to more recent papers of
Flandoli \& Romito and Goldys et al.

\end{abstract}

\let\thefootnote\relax\footnote{MSC: 28B20, 54C60, 60J25, 35A02}

\let\thefootnote\relax\footnote{\keywords{measurable selection theorems, semiflows, Markov property, Markov selection theorem}}


\section{Introduction}

This paper originates in our studies of the Markov property for statistical solutions of
evolution PDEs. There, in probabilistic setting, the fundamental result is the Markov selection
theorem of N. V. Krylov \cite{Krylov} and its connection with the martingale problem as shown by
D. W. Stroock and S. R. S. Varadhan \cite{S-V}. While reading these sources along with more recent developments  of F. Flandoli and M. Romito \cite{F-R} and B. Goldys et al \cite{G-R-Z}, we have realized that there is a deterministic result lurking behind probabilistic considerations. The deterministic analog of the Markov property of a time-homogeneous Markov process is the semigroup property
of solutions of an autonomous differential equation. The semigroup property arises naturally when
the solutions of a differential equation are unique, and leads to a semiflow.  When there is no uniqueness, the literally understood semigroup property does not make sense (except when lifted to dynamics in spaces of sets, or when, as proposed by G. Sell, dynamics is viewed on the space of trajectories \cite{Se}). However, as we have discovered
and prove in this paper,
there is a measurable {\it selection} with the semigroup property. In a vast literature on selection theorems (see the surveys \cite{Wa,Io} and a more recent account in \cite{H-P})  the semigroup question it seems has been overlooked. Our result remedies this situation.

Let $X$ be a set,  and let $\Omega$ be the space of all one-sided infinite paths in $X$ (i.e., the maps
$w: [0, +\infty)\to X$;
both continuous and discrete time cases are admissible, but assume $t\in [0, +\infty)$ for now).
Let $S = \left(S(x)\right)_{x\in X}$ be a family of subsets of $\Omega$ such that each
$S(x)$ is a subset of $\Omega_x = \{w\in\Omega :\;w(0) = x\}$.
Think of $S(x)$ as Kneser's integral funnel, the set of all possible solutions of some differential equation with the same
initial condition $x$ at time $t = 0$.  A {\it selection} of the family $S$ is a map ${\mathfrak u}: [0, +\infty)\times X\to X$ such that ${\mathfrak u}(\cdot, x)\in S(x)$ for every $x\in X$ (a selection picks a solution for every initial condition). We say that the selection ${\mathfrak u}$ has the {\it semigroup property}, or that ${\mathfrak u}$ is a {\it semiflow selection}, if
\be\label{sg}
{\mathfrak u}(t_2, {\mathfrak u}(t_1, x)) = {\mathfrak u}(t_2 + t_1, x), \quad\forall t_1, t_2 \ge 0\;\;\forall x\in X\,.
\ee
We give precise statements and a proof of this result in Section~2.

\medskip

Selections with the semi-group  property are not, in general, unique. How badly non-unique
it can be is shown in a remarkable fashion by A. Beck \cite{Be}. Nevertheless,
our selection produces a semiflow that is unique as a unique maximizer of a certain countable
family of functionals.
However, there is a lot of freedom in the choice of this family,
and the maximizers for different families may, in principle, differ.
In Section~\ref{sec:uniq} we give a particularly simple example
where the selection can be made explicitly, and show how the selection depends on the choice of
the functional family.

\medskip

That we talk about semi-groups and not groups here is important. There may be no selection with the group property: consider
this simple example:  ${\dot x} = 2\;\hbox{sign}(x)\;\sqrt{|x|}$.
\medskip

 We include in this paper results on the Markov selection as well.  Though at first glance the Markov selection and the semigroup selection appear to be quite different, they have a lot in common (as we see it).
{\bf The Markov selection  setting} can be described as follows.
Again, we have the set $X$, but now the action is happening in the space $\mathcal M$ of probability measures over the space of paths, $\Omega$.
Over every point $x\in X$ a set ${\mathscr C}(x)\subset {\mathcal M}$ is given, and from every
$P_x\in {\mathscr C}(x)$ issues a path $P_x[\theta_t^{-1}(\cdot) |\,{\cal F}_t](\cdot)$, $t\in [0, +\infty)$, of regular conditional probability distributions such that
$P_x[\theta_t^{-1}(\cdot) |\,{\cal F}_t](w)\in{\mathscr C}(w(t))$ for $P_x$-almost all $w\in\Omega$.  (Here $\theta_t$
is the shift on paths, $\theta_t w(\cdot) = w(t + \cdot)$, and $\left({\cal F}_t\right)$ is a filtration of
$\sigma$-algebras on $\Omega$.)
A selection of the family ${\mathscr C} = \{{\mathscr C}(x), x\in X\}$ is a map $P_{(\cdot)} : X\to {\mathcal M}$ such that
$P_x\in {\mathscr C}(x)$ for all $x\in X$. The selection $P_{(\cdot)}$ is Markov (has Markov property) if
$P_x[\theta_t^{-1}(\cdot) |\,{\cal F}_t](w)= P_{w(t)(\cdot)}$ for $P_x$-almost all $w\in\Omega$ and all $t > 0$.
In Section~3 we prove an abstract result on what we call Krylov set-valued maps. We hope to have simplified (a little bit) Krylov's argument. An old theorem of V.~Strassen \cite[Theorem 3]{Strassen} plays significant role in our considerations (not surprisingly, it is a selection theorem). The Markov selection and the strong Markov selection theorems (in the form close to that used in recent papers
\cite{F-R,G-R-Z}) are obtained as corollaries of abstract selection results.
\medskip

In Section~\ref{sec:examples} we discuss briefly applications of selections with the semigroup property to ODEs,
differential inclusions, and the three-dimensional Navier-Stokes equations. Applications of the Markov selection theorems will be discussed elsewhere.

\bigskip

{\sc \bf Acknowledgment:}
It was kindly brought to our attention by Professor E.~Feireisl that our application to the Leray-Hopf solutions of the Navier-Stokes equations required an extra analysis of the singular times where the strong energy inequality is not valid.
The results in Section~\ref{gener} allow us to treat such situations.


\section{ Semiflow selection}\label{sec:semi}

\subsection{Basic case} \label{sec:basic-case}

\noindent{\bf --}\ \ $X$ is a separable complete metric space (with metric $\rho_X$);  ${\cal B}_X$ is
the Borel $\sigma$-algebra of $X$; notation
${\cal B}_Z$ will be used for the Borel $\sigma$-algebras of various metric spaces $Z$.

\noindent{\bf --}\ \ $\Phi$ is a countable set of bounded continuous functions on $X$ that separate
points of $X$.

\noindent{\bf --}\ \ ${\cal T}$ (time) is $\R_+ = [0, +\infty)$ or $\Z_+ = \{0, 1, 2, \dots\}$;

\noindent{\bf --}\ \ $\Omega = C({\cal T}\to X)$ is the space of continuous (infinite) paths in $X$ with the topology of uniform convergence on compact subsets of ${\cal T}$. $\Omega$ is a separable complete metric space with the metric
\be\label{met-Om}
d(u, v) = \sum_{\ell = 1}^\infty 2^{-\ell}\,\sup_{t\in [0, \ell]}\rho(u(t), v(t))\;[1 + \sup_{t\in [0, \ell]}\rho(u(t), v(t))]^{-1}\,.
\ee
$\Omega_x$ denotes the set of all paths in $\Omega$ starting at the point $x$;

\noindent{\bf --}\ \ Given two paths $w$ and $v$ such that $w(s) = v(0)$, we define their splicing
$w \bowtie_s v$
as a new path
\[
w \bowtie_s v(t) = \begin{cases}
w(t) & \text{when}\;0\le t\le s \\
v(t-s) & \text{when}\; t \ge s\,.
\end{cases}
\]
\bigskip

\noindent{\bf --}\ \ $\theta_s$ is a shift on $\Omega$: $\theta_s : w(\cdot) \to w(s + \cdot)$;
\bigskip

The dynamical structure is given by the shifts $\theta_s$  and by a set-valued map ${S} : X \to 2^{\Omega}$ with the following properties.
\bigskip

\noindent{\bf Properties of ${S}$}.
\begin{enumerate}
\item[{\bf S1}]{\bf --}\  ${S}$ maps points $x\in X$ to compact subsets ${S}(x)\subset \Omega_x$
\item[{\bf S2}]{\bf --}\  The map ${S} : X \to 2^{\Omega}$ is measurable in the sense that for any closed set $K\subset \Omega$, the set
\[
S^-(K) = \{x\in X : {S}(x) \cap K \neq \emptyset\}
\]
is Borel in $X$. Assumptions {\bf S1} and {\bf S2} (and the assumptions on $X$) imply that the map ${S}$ satisfies the conditions of the measurable selection theorem of Kuratowski and Ryll-Nardzewski,
\cite[Theorem 1]{K-R}, and therefore, there exists
a measurable selection of ${S}$, i.e., a $({\cal B}_X, {\cal B}_\Omega)$-measurable map
${\mathfrak u}: X\to \Omega$ such that ${\mathfrak u}(x)\in {S}(x)$ for all $x\in X$.
\item[{\bf S3}]{\bf --}\  The map ${S}$ is compatible with the semigroup $\theta_s$ in the sense that
\be\label{compat-1}
w\in {S}(x) \implies \theta_s(w)\in {S}(w(s))\,,\quad \forall s\in{\cal T}\,,\;\forall x\in X\,.
\ee
\item[{\bf S4}]{\bf --}\ If $w\in {S}(x)$ and $v\in {S}(w(s))$ for some $s\in {\cal T}$, then
$w\bowtie_s v\in {S}(x)$. In particular, if
${\mathfrak u}$ is any measurable selection of ${S}$,
then the path $w \bowtie_s {\mathfrak u}(w(s))$ belongs to ${S}(x)$.
\end{enumerate}

\noindent{\bf --}\ \ Selections of ${S}$ are paths, so ${\mathfrak u}(x)$ denotes a path in ${S}(x)$. When we want to show the location of the path ${\mathfrak u}(x)$ at time $t$, we write
${\mathfrak u}(t, x)$. In particular, ${\mathfrak u}(0, x) = x$.

\begin{theorem}\label{thm:main}
The set-valued map ${S}$ has a measurable selection ${\mathfrak u}$ with the semigroup property:
\be\label{sg-4}
{\mathfrak u}(t_2, {\mathfrak u}(t_1, x)) = {\mathfrak u}(t_1 + t_2, x)\,,
\ee
for all $t_1, t_2\in {\cal T}$, and for all $x\in X$. The corresponding semigroup $U(t): X\to X$,
defined by the formula $U(t)(x) = {\mathfrak u}(t, x)$ for all $t\ge 0$ and all $x\in X$, is a
Borel measurable map for every $t$.
\end{theorem}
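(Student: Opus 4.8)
The plan is to imitate, in this deterministic setting, N.~V.~Krylov's device for Markov selection: to manufacture a countable family of functionals on $\Omega$ that behave well under both splicing and shifting, and then to pare $S$ down to the sets on which these functionals are successively maximized, until exactly one path survives over each point. Concretely, I would fix an enumeration $\{\phi_k\}_{k\ge1}$ of $\Phi$ and an enumeration of the positive rationals (of $\Q\cap(0,1)$ when ${\cal T}=\Z_+$), and let $(I_n)_{n\ge1}$ enumerate all the functionals
\[
J_{\phi,\lambda}(w)=\int_0^{\infty}e^{-\lambda t}\,\phi(w(t))\,dt
\qquad\Big(\text{resp. }\ \textstyle\sum_{m\ge0}\lambda^{m}\phi(w(m))\ \text{ in discrete time}\Big)
\]
with $\phi$ ranging over $\Phi$ and $\lambda$ over the chosen rationals. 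Each $I_n$ is bounded and, by dominated convergence, continuous on $\Omega$. Everything below rests on the splicing identity
\[
J_{\phi,\lambda}(w\bowtie_s v)=\int_0^{s}e^{-\lambda t}\phi(w(t))\,dt+e^{-\lambda s}\,J_{\phi,\lambda}(v)
\]
(valid when $w(s)=v(0)$) together with its special case $J_{\phi,\lambda}(w)=\int_0^{s}e^{-\lambda t}\phi(w(t))\,dt+e^{-\lambda s}\,J_{\phi,\lambda}(\theta_s w)$, obtained by taking $v=\theta_s w$.

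Next I would set $S_0=S$ and recursively define $c_n(x)=\max_{v\in S_{n-1}(x)}I_n(v)$ and $S_n(x)=\{\,w\in S_{n-1}(x):I_n(w)=c_n(x)\,\}$, and prove by induction that every $S_n$ still satisfies \textbf{S1}--\textbf{S4}. Property \textbf{S1} is immediate: $I_n$ is continuous on the compact set $S_{n-1}(x)$, so $S_n(x)$ is a nonempty closed, hence compact, subset of $\Omega_x$. For \textbf{S3}: if $w\in S_n(x)$ then $\theta_s w\in S_{n-1}(w(s))$ by the inductive hypothesis, and were there $v\in S_{n-1}(w(s))$ with $I_n(v)>I_n(\theta_s w)$, then $w\bowtie_s v\in S_{n-1}(x)$ by \textbf{S4} for $S_{n-1}$, and the splicing identity would force $I_n(w\bowtie_s v)>I_n(w)=c_n(x)$, contradicting the definition of $c_n(x)$; hence $\theta_s w\in S_n(w(s))$. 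For \textbf{S4}: if $w\in S_n(x)$ and $v\in S_n(w(s))$ then $w\bowtie_s v\in S_{n-1}(x)$ as before, while $I_n(v)=c_n(w(s))=I_n(\theta_s w)$ (using \textbf{S3} for $S_n$, just established), so the splicing identity gives $I_n(w\bowtie_s v)=I_n(w)=c_n(x)$ and therefore $w\bowtie_s v\in S_n(x)$. The step I expect to be the real obstacle is \textbf{S2}: one must check that $x\mapsto c_n(x)$ is Borel and that $S_n^-(K)$ is Borel for every closed $K\subset\Omega$. The first holds because $\{c_n>a\}=\bigcup_m S_{n-1}^-\big(\{I_n\ge a+1/m\}\big)$ and each set $\{I_n\ge a+1/m\}$ is closed; for the second, the multifunction $x\mapsto S_{n-1}(x)\cap K$ is again measurable with compact values, its pointwise maximum $d_n(x)=\max_{v\in S_{n-1}(x)\cap K}I_n(v)$ is Borel by the same argument, and $S_n^-(K)$ equals the set of $x\in S_{n-1}^-(K)$ with $d_n(x)=c_n(x)$. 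These are standard facts about measurable compact-valued multifunctions and continuous integrands, but this is where the bookkeeping lives.

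Finally I would put $S_\infty(x)=\bigcap_{n\ge0}S_n(x)$. As a decreasing intersection of nonempty compacts it is nonempty and compact; it inherits \textbf{S3} and \textbf{S4} termwise, and it satisfies \textbf{S2} because, for closed $K$, the sets $S_n(x)\cap K$ are decreasing nonempty compacts, so their intersection is nonempty iff every term is, giving $S_\infty^-(K)=\bigcap_n S_n^-(K)$, Borel. Crucially, $S_\infty(x)$ is a single path: if $w_1,w_2\in S_\infty(x)$ then $I_n(w_1)=c_n(x)=I_n(w_2)$ for all $n$, i.e. $\int_0^\infty e^{-\lambda t}\phi(w_1(t))\,dt=\int_0^\infty e^{-\lambda t}\phi(w_2(t))\,dt$ for every $\phi\in\Phi$ and every rational $\lambda>0$, hence for all $\lambda>0$ by analyticity, hence $\phi(w_1(t))=\phi(w_2(t))$ for all $t$ by injectivity of the Laplace transform and continuity in $t$ (in discrete time, by equality of power series in $\lambda$); since $\Phi$ separates points of $X$, $w_1=w_2$. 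Write $S_\infty(x)=\{\mathfrak u(x)\}$. Then $\mathfrak u:X\to\Omega$ is $({\cal B}_X,{\cal B}_\Omega)$-measurable since $\mathfrak u^{-1}(K)=S_\infty^-(K)$ is Borel for every closed $K$ (equivalently, $\mathfrak u$ is the unique measurable selection furnished by \cite[Theorem 1]{K-R}). For the semigroup property, apply \textbf{S3} to $\mathfrak u(x)\in S_\infty(x)$: $\theta_{t_1}\mathfrak u(x)\in S_\infty\big(\mathfrak u(t_1,x)\big)=\{\mathfrak u(\mathfrak u(t_1,x))\}$, so $\theta_{t_1}\mathfrak u(x)=\mathfrak u(\mathfrak u(t_1,x))$ as paths, and reading this identity at time $t_2$ yields \eqref{sg-4}. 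Lastly, $U(t)(x)=\mathfrak u(t,x)$ is the value at time $t$ of the path $\mathfrak u(x)$, so $U(t)$ is the composition of the Borel map $\mathfrak u$ with the continuous evaluation $w\mapsto w(t)$ on $\Omega$, hence Borel measurable.
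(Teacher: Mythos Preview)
Your proposal is correct and follows essentially the same line as the paper's proof: the same Laplace-type functionals, the same recursive maximizer reduction, the same splicing identity to propagate \textbf{S3}--\textbf{S4}, and the same Laplace-transform/separating-family argument to get singletons. The only noteworthy difference is that the paper dispatches \textbf{S2} for each $S_n$ by invoking a measurable-maximum theorem \cite[Theorem 18.19]{A-B}, whereas you unfold the argument by hand via $\{c_n>a\}=\bigcup_m S_{n-1}^-(\{I_n\ge a+1/m\})$ and the auxiliary function $d_n$; both work.
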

\begin{proof}  There are three steps in the proof.


Step 1. Recall that $\Phi$ is a countable subset of the space $C_b(X)$  (of bounded continuous functions on $X$) that separates points of $X$. Choose a
countable dense subset  $\Lambda$ of $(0, +\infty)$, and let $(\lambda_n, \vp_n)$, $n = 1, 2, \dots$, be some enumeration of the Cartesian product $\Lambda\times \Phi$.
Associate with each  $(\lambda_n, \vp_n)$ the following continuous function $\zeta_n$ on $\Omega$:
\be\label{zeta_n}
\zeta_n(w) = \int_0^\infty e^{-\lambda_n t}\,\vp_n(w(t))\,dt
\ee
(in the case ${\cal T}$ is discrete, we replace the integral by a sum). Let $\zeta$ be one of the functions $\zeta_n$,
\[
\zeta(w) = \int_0^\infty e^{-\lambda t}\,\vp(w(t))\,dt\,.
\]
$\zeta$ is a continuous function on $\Omega$, and as such, it attains its maximum on each compact set
${S}(x)$. Denote
\be\label{V}
V_{\zeta}[{S}(x)] = \{ w\in {S}(x) : \zeta(w) = \max_{v\in {S}(x)} \zeta(v)\}\,.
\ee
We will say that $w$ is a maximizer of $\zeta$ in ${S}(x)$ if $w\in V_{\zeta}[{S}(x)]$. By extension,
$V_{\zeta}[{S}]$ will denote the set-valued map $x\mapsto V_{\zeta}[{S}(x)]$.

It turns out that the new set-valued map $V_{\zeta}[{S}] : X \to 2^{\Omega}$ has the same properties
{\bf S1} through {\bf S4} as the map ${S}$.

Indeed, the sets $V_{\zeta}[{S}(x)]$ are compact since the sets ${S}(x)$ are compact and $\zeta$ is continuous. Measurability of $V_{\zeta}[{S}]$ follows from the following version of the Dubins and Savage (see \cite{Dubins-Savage}) measurable maximum theorem.
\begin{theorem}\label{thm:meas-max}
Suppose  $S$ is a weakly-measurable set-valued map from a measurable space
$(X, \Sigma)$  with compact values in a separable metrizable space $Y$. Let $f: X\times Y\to \R$
be a Crath\'eodory function (i.e., $f(\cdot, y)$ is $(\Sigma, {\cal B}_{\R})$-measurable for every $y\in Y$ and $f(x, \cdot)$ is continuous for every $x\in X$). Then the set-valued map
\[
V_f[S]: x \mapsto \{y\in S(x): f(x, y) = \max_{z\in S(x)} f(x, z)\}\,
\]
(has non-empty compact values and) is measurable.
\end{theorem}
For the proof see \cite[Theorem 18.19]{A-B}. The notions of a weakly-measurable and a measurable
(in the sense of property {\bf S2})
set-valued map are equivalent in the case the target space is separable metrizable and $S(x)$ is non-empty compact for every $x$, as follows from \cite[Theorem 18.10]{A-B}.

Thus, the map $V_{\zeta}[{S}]$ is measurable. To establish {\bf S3}, pick a path
$w\in V_{\zeta}[{S}(x)]$. We need to show that $\theta_s(w)$ is a maximizer in ${S}(w(s))$.
Let $v$ be any path in ${S}(w(s))$. Then we know that the spliced path $w\bowtie_s v$ is in ${S}(x)$.
Since $w$ is a maximizer in ${S}(x)$, $\zeta(w) \ge \zeta(w\bowtie_s v)$, which implies
\[
\int_s^\infty e^{-\lambda t} \vp(w(t))\,dt \ge \int_s^\infty e^{-\lambda t} f\vp(v(t - s))\,dt =
e^{- \lambda s}\,\int_0^\infty e^{-\lambda t} \vp(v(t))\,dt = e^{- \lambda s}\,\zeta(v)\,.
\]
Since
\[
\zeta(\theta_s(w)) = \int_0^\infty e^{-\lambda t} \vp(w(t + s))\,dt = e^{\lambda s}\,\int_s^\infty e^{-\lambda t} \vp(w(t))\,dt\,,
\]
we have
\[
\zeta(\theta_s(w)) \ge \zeta(v)\,.
\]
This is true for all $v\in {S}(w(s))$. Hence, $\theta_s(w)$ is a maximizer in ${S}(w(s))$.

It remains to check property {\bf S4} for $V_{\zeta}[{S}]$. Let $w$ be a maximizer in ${S}(x)$
and let $v$ be a maximizer in ${S}(w(s))$. Consider the spliced path $w\bowtie_s v$. Then
\[
\begin{aligned}
& \zeta(w\bowtie_s v) = \int_0^s e^{-\lambda t} \vp(w(t))\,dt + \int_s^\infty e^{-\lambda t} \vp(v(t - s))\,dt = \\
& \int_0^s e^{-\lambda t} \vp(w(t))\,dt + e^{- \lambda s}\,\zeta(v) = \int_0^s e^{-\lambda t} \vp(w(t))\,dt + e^{- \lambda s}\,\zeta(\theta_s(w)) = \zeta(w)\,.
\end{aligned}
\]
Since $w$ is assumed to be a maximizer in ${S}(x)$, the path $w\bowtie_s v$ must be a maximizer as well.
Note that $\zeta(v) = \zeta(\theta_s(w))$ because $\theta_s(w)$ is a maximizer as was shown above.
\medskip

Step 2. Define recursively ${S}^0 = {S}$ and ${S}^{n+1} = V_{\zeta_{n+1}}[{S}^n]$.
For every $x\in X$,  ${S}^{n}(x)$ is a monotone decreasing sequence of embedded non-empty compacta.  Hence, the intersection is not empty, and we denote it by ${S}^{\infty}(x)$,
\[
{S}^{\infty}(x) = \bigcap_{n=0}^\infty {S}^{n}(x)\,.
\]
Notice that the set-valued map ${S}^{\infty}$ enjoys the same properties {\bf S1} through {\bf S4} as ${S}$.
In particular, it is measurable as the intersection of measurable set-valued maps, see \cite[Lemma 18.4]{A-B}.  If $w, v\in {S}^{\infty}(x)$, then, for every $\vp\in\Phi$,
\[
\int_0^\infty e^{-\lambda t} \vp(w(t))\,dt = \int_0^\infty e^{-\lambda t} \vp(v(t))\,dt
\]
for all $\lambda$ in the dense in $[0,+\infty)$ set $\Lambda$.
Thanks to the uniqueness of the Laplace transform, then $\vp(w(t)) = \vp(v(t))$ for all $t\ge 0$.
Since the set $\Phi$ separates points in $X$, $w(t) = v(t)$ for all $t$. Thus,
${S}^{\infty}(x)$ is a singleton.
\medskip

Step 3. Define the selection ${\mathfrak u}$ that assigns to every $x$ the unique path in ${S}^{\infty}(x)$.
This is the only possible selection of ${S}^{\infty}$ and it must be measurable by the Kuratowski-- Ryll-Nardzewski theorem. The property {\bf S3} enjoyed by ${S}^{\infty}$ ensures that
$\theta_s{\mathfrak u}(\cdot, x)$ lies in ${S}^{\infty}({\mathfrak u}(s, x))$, which proves the semigroup property
${\mathfrak u}(\cdot + s, x) = {\mathfrak u}(\cdot, {\mathfrak u}(s, x))$ for all $s$ and $x$.

Now consider the maps $U(t): X\owns x\mapsto {\mathfrak u}(t, x)\in X$. Clearly, they form a semigroup:
$U(0)$ is the identity map and $U(t_2)\circ U(t_1) = U(t_2+t_1)$.
Each map $U(t)$ is a composition of the $({\cal B}_X, {\cal B}_\Omega)$-measurable selection ${\mathfrak u} : X\to \Omega$ and the evaluation map
$\pi_{t} : \Omega\owns w(\cdot)\mapsto w(t)\in X$, which is continuous for every $t$.  Consequently, $U(t) : X\to X$ is Borel measurable. This
completes the proof of the theorem.

\end{proof}


\subsection{A generalization}\label{gener}

For applications to PDEs, it may be necessary to weaken somewhat assumptions of Theorem~\ref{thm:main} (see the Navier-Stokes example in Section~\ref{sec:NS}).
What follows are not ultimate, but are sufficient generalizations for our current purposes.

\subsubsection{Conditional shifts}\label{sec:conditional-shifts}

Consider the same situation as in Section~\ref{sec:basic-case}, but assume now that the set-valued map $S$ satisfies only the properties \textbf{S1}, \textbf{S2} and
\begin{enumerate}
\item[{\bf S3a}]{\bf --}\
  For every $x \in X$, $w \in S(x)$, and $s \in \mathcal{T}$, $$\theta_s(w) \in S(w(s)) \quad \implies \quad \left\{\spliced{w}{s}{u} : u \in S(w(s)) \right\} \subset S(x)\,.$$
\end{enumerate}
It is worth mentioning that the property \textbf{S3a} can be interpreted as the property \textbf{S4} conditioned to the cases when \textbf{S3} is satisfied.

\begin{theorem}\label{thm:conditional-selection}
  The set-valued map $S$ satisfying the properties \textbf{S1}, \textbf{S2}, and \textbf{S3a} has a measurable selection $\U$ with the following semigroup property: for every $x \in X$ and $t_1 \in \mathcal{T}$, if $\U(t_1 + \cdot, x) \in S(\U(t_1))$ then $$\U(t_2, \U(t_1, x)) = \U(t_1 + t_2, x)$$ for all $t_2 \in \mathcal{T}$.
\end{theorem}

\begin{proof}
  The proof is similar to that of Theorem~\ref{thm:main}.

  Step 1. Let $\{(\lambda_n, \phi_n)\}$ be some enumeration of $\Lambda \times \Phi$ where $\Lambda$ is a dense and countable subset of $(0, +\infty)$ and $\Phi$ is a countable family of separating functions of $X$.
  Construct the set-valued maps $V_\zeta[S]$ as in (\ref{V}), properties \textbf{S1} and \textbf{S2} will be preserved as before.

  The property \textbf{S3a} is also preserved.
  Indeed, let $w$ be a maximizer in $S(x)$ with $\theta_s(w)$ a maximizer in $S(w(s))$.
  Consider the spliced path $\spliced{w}{s}{v} \in S(x)$, where $v$ is also a maximizer in $S(w(s))$, thus $\zeta(\theta_s(w)) = \zeta(v)$ and
  \begin{align*}
    \zeta(\spliced{w}{s}{v}) &= \int_0^s e^{-\lambda t} \vp(w(t))\,dt + e^{- \lambda s}\,\zeta(v) \\ &= \int_0^s e^{-\lambda t} \vp(w(t))\,dt + e^{- \lambda s}\,\zeta(\theta_s(w)) = \zeta(w)\,.
  \end{align*}
  Since $w$ is assumed to be a maximizer in ${S}(x)$, the path $\spliced{w}{s}{v}$ must be a maximizer as well.

  Step 2. Define recursively the maps $S^0 = S$ and $S^{n+1} = V_{\zeta_{n+1}}[S^n]$, as well as $S^\infty(x) = \cap_{n=0}^\infty S^n(x)$. As in the proof of Theorem~\ref{thm:main}, $S^\infty$ takes values in singleton sets, and every $S^n$ and $S^\infty$ satisfy the properties {\bf S1}, {\bf S2}, and {\bf S3a}.

  Step 3. Define $\U$ as the selection that assigns to $x$ the unique element in $S^\infty(x)$, this map is measurable in the usual sense.

  To establish the semigroup property, fix $x \in X$, $t_1 \in \mathcal{T}$ and assume that $\U(t_1 + \cdot, x) \in S(\U(t_1,x))$. Notice that $\U(x) \in S^n(x)$ for every $n \geq 0$, and take $w \in S^1(\U(t_1,x))$. Since the map $S^1$ satisfies the property {\bf S3a} the splice $\spliced{\U(x)}{t_1}{w}$ is in $S^1(x)$, which means that $\zeta_1(\U(x)) = \zeta_1(\spliced{\U(x)}{t_1}{w})$, hence $\zeta_1(\U(t_1 + \cdot, x)) = \zeta_1(w)$ and $\U(t_1 + \cdot, x) \in S^1(\U(t_1, x))$. Take $w \in S^2(\U(t_1,x))$ and continue in the same way to obtain that $\U(t_1 + \cdot, x) \in S^n(\U(t_1, x))$ for very $n \geq 0$, hence $\U(t_1 + \cdot, x) \in S^\infty(\U(t_1, x))$. This completes the proof.
\end{proof}

\subsubsection{Inductive limit}\label{sec:inductive-limit}

Let $X^1\subset X^2\subset X^3\subset \cdots$ be an increasing sequence of topological $T_1$-spaces.  Assume
that the inclusion maps $\phi_i: X^i\to X^{i+1}$ are homeomorphisms between $X^i$ and $\phi_i(X^i)\subset X^{i+1}$ (in particular, each $\phi_i$ is and open and closed map, and $\phi_i(X^i)$ is closed in $X^{i+1}$).
Define $X$ as the direct (inductive) limit of spaces $X^n$ with maps $\phi_n$:
\[
X = \varinjlim X^n \,.
\]
As a set, $X = \cup_n X^n$. We equip $X$ with the final topology, i.e., each inclusion map $f_i : X^i\to X$ is continuous. A set $U\subset X$ is open (closed) iff $f_i^{-1}(U) = \{x_i\in X^i : f_i(x_i)\in U\}$ is open (closed) in $X^i$ for every $i$. The limit space $X$ is $T_1$ (because, for every point $x\in X$, $f_i^{-1}(x) = x$ or $f_i^{-1}(x) = \emptyset$, in any case a closed set). Every finite subset of $X$ is closed and compact. Each $X^i$ is a closed subset of $X$. It turns out that for every compact set $K\subset X$ there is an $i_0$ such that
$K\subset f_i(X^i) = X^i$ for all $i\ge i_0$. This is a version of Steenrod's lemma \cite[Lemma 9.3]{Steenrod}.

\medskip

Next, consider the spaces of paths $\Omega^i = C_{loc}([0, +\infty)\to X^i)$ with compact-open topologies. We have $\Omega^i\subset\Omega^j$ continuously when $j \ge i$, and we define the inductive limit
\[
\Omega = \varinjlim\; \Omega^i
\]
as before. The maps $f_i$ map paths $w(\cdot)$ in $\Omega^i$ to continuous paths $f_i(w(\cdot))$ in $\Omega$. As in the previous section, $\Omega_x$ denotes all paths in $\Omega$ that start at $x\in X$.
If $x\in X$ and $i_0$ is the smallest integer such that $x\in X^{i_0}$, then
$\Omega_x = \cup_{i\ge i_0}\Omega_x^{i}$.
Note, that for any compact subset of $\Omega$, there is an $i$ such that this compact subset lies
entirely inside $\Omega^{i}$.
\medskip

We will consider set-valued maps $S: X\to 2^\Omega$. It is important to specify what measurability
of such maps means.
\medskip

Assume each space $X^i$ carries a $\sigma$-algebra ${\cal B}^i$ of its subsets. Then we can defined
a $\sigma$-algebra ${\cal B}_X$ for $X$ as follows: a set $B$ belongs to ${\cal B}_X$ iff
$f_i^{-1}(B)\in {\cal B}^i$ for every $i$.
[Check: the complements are in ${\cal B}_X$ because $f_i^{-1}(X\setminus B) = X^i\setminus f_i^{-1}(B)$.
The countable unions are in ${\cal B}_X$ because $f_i^{-1}(\cup_n A_n) = \cup_n f_i^{-1}(A_n)$.]
\medskip

Once the $\sigma$-algebra ${\cal B}_X$ is specified, we say that a set-valued map $S: X\to 2^\Omega$
(with values in the non-empty closed subsets of $\Omega$) is closed-measurable if
\be\label{meas}
S^{-}(C) = \{x\in X: S(x) \cap C \neq \emptyset\}\in {\cal B}_X
\ee
for all closed $C\subset\Omega$.
\medskip

With these preliminaries, we can state the result.
\begin{theorem}\label{thm:main-2}
  Assume that each space $X^i$ is Polish and that ${\mathcal B}^i$ is its Borel $\sigma$-algebra.
  Let $S$ be a set-valued map $S: X\to 2^{\Omega}$ with the properties {\bf S1 -- S4} (measurability in property {\bf S2} is understood as in \eqref{meas}). In addition, assume that for every $i$ there is an integer $k_i\ge i$ such that $S(X^i)\subset \Omega^{k_i}$.
  Then $S$ has a measurable selection with the semigroup property.
\end{theorem}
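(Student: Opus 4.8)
The plan is to mimic the three-step argument of Theorem~\ref{thm:main}, localizing everything to the stratified structure $X = \varinjlim X^n$ and exploiting the hypothesis that $S$ does not ``escape to infinity'' in finite time, i.e. $S(X^i)\subset\Omega^{k_i}$. First I would set up the maximizing functionals stratum by stratum. For each $i$, enumerate $\Lambda\times\Phi^i$ as pairs $(\lambda^i_n,\vp^i_n)$ with $\Lambda$ a fixed countable dense subset of $(0,\infty)$, and form $\zeta^i_n(w)=\int_0^\infty e^{-\lambda^i_n t}\,\vp^i_n(w(t))\,dt$. The key observation is that if $x\in X^i$, then by Steenrod's lemma every path in $S(x)\subset\Omega^{k_i}$ takes values in the fixed Polish space $X^{k_i}$, so $\zeta^i_n$ is a well-defined bounded continuous function on the compact set $S(x)$; moreover, because the $\phi_j$ are closed embeddings, $\Phi^{k_i}$ restricted to $X^i$ still separates points of $X^i$. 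So I would actually take, for the $i$-th stratum, the functionals built from $\Phi^{k_i}$ (or, more cleanly, take the union over all $i$ of the functional families and arrange them into a single sequence, using the fact that any given $x$ lies in some $X^i$ and only finitely-many-at-a-time of these functionals matter on $S(x)$).

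Second I would run the iteration. Arrange all pairs coming from $\bigcup_i (\Lambda\times\Phi^{k_i})$ into one sequence $(\lambda_m,\vp_m)_{m\ge1}$ (each $\vp_m$ is a bounded continuous function on some $X^{k_{i(m)}}$, hence extends to a function on the whole of $X$ that is continuous on each stratum — though for the argument it is enough that it is defined and continuous on the relevant strata). Define $\zeta_m(w)=\int_0^\infty e^{-\lambda_m t}\vp_m(w(t))\,dt$, well-defined and continuous on each $\Omega^\ell$ containing $w$. Set $S^0=S$ and $S^{m+1}=V_{\zeta_m}[S^m]$ using the measurable maximum theorem (Theorem~\ref{thm:meas-max}). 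To apply that theorem one must check, for each $m$, that $V_{\zeta_m}[S^m]$ is weakly/closed-measurable in the sense of \eqref{meas}: since $S^m(X^i)\subset\Omega^{k_i}$ is preserved at every stage (splicing keeps paths in the same stratum, as does passing to maximizers), $f_i^{-1}$ of the relevant sets lands in $X^i$, where $X^i$ is Polish and $\zeta_m$ is a genuine Carathéodory (here: continuous) function on $\Omega^{k_i}$; apply Theorem~\ref{thm:meas-max} on that Polish stratum and then assemble, using the definition of ${\cal B}_X$ via the $f_i^{-1}$. The inheritance of {\bf S1} (compact values), {\bf S3} (compatibility with $\theta_s$), and {\bf S4} (closure under splicing) goes through verbatim as in Step~1 of Theorem~\ref{thm:main}, because all those arguments are pointwise in $x$ and each $S(x)$ lives in a single Polish $\Omega^{k_i}$; the splicing identities and the $\theta_s$-maximizer computation are literally the same integrals. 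Then $S^\infty(x)=\bigcap_m S^m(x)$ is a decreasing intersection of non-empty compacta, hence non-empty, and $S^\infty$ again has {\bf S1--S4}; measurability of the intersection follows from \cite[Lemma 18.4]{A-B} applied on each stratum and assembled.

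Third I would show $S^\infty(x)$ is a singleton: if $w,v\in S^\infty(x)$ with $x\in X^i$, then $w,v\in\Omega^{k_i}$ take values in $X^{k_i}$, and for every $\vp\in\Phi^{k_i}$ and every $\lambda\in\Lambda$ the Laplace transforms $\int_0^\infty e^{-\lambda t}\vp(w(t))\,dt$ and $\int_0^\infty e^{-\lambda t}\vp(v(t))\,dt$ coincide (these pairs are among the $(\lambda_m,\vp_m)$), so by injectivity of the Laplace transform and continuity, $\vp(w(t))=\vp(v(t))$ for all $t$; since $\Phi^{k_i}$ separates points of $X^{k_i}$ we get $w=v$. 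Defining ${\mathfrak u}(x)$ to be the unique element of $S^\infty(x)$ gives the selection; it is the unique selection of $S^\infty$, hence ${\cal B}_X$-measurable by Kuratowski--Ryll-Nardzewski (valid because on each stratum $X^i$ is Polish and $S^\infty|_{X^i}$ is a closed-valued measurable map into the Polish space $\Omega^{k_i}$), and {\bf S3} for $S^\infty$ yields $\theta_s{\mathfrak u}(\cdot,x)\in S^\infty({\mathfrak u}(s,x))$, hence the semigroup property \eqref{sg-4}. Measurability of each $U(t):x\mapsto{\mathfrak u}(t,x)$ follows by composing ${\mathfrak u}$ with the evaluation $\pi_t$, which is continuous $\Omega^i\to X^i$ for each $i$ and therefore continuous $\Omega\to X$.

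The main obstacle, and the place where care is genuinely needed rather than routine, is the bookkeeping of measurability across the inductive-limit structure: one must verify that ``closed-measurable in the sense of \eqref{meas}'' is exactly the hypothesis needed to feed Theorem~\ref{thm:meas-max} and the Kuratowski--Ryll-Nardzewski theorem on each Polish stratum $X^i$ (with target $\Omega^{k_i}$) and then reassemble, and that every operation performed (passing to $\zeta$-maximizers, intersecting the decreasing family, restricting to a stratum) preserves both the stratum-confinement property $S^m(X^i)\subset\Omega^{k_i}$ and the measurability. The confinement property is what makes the continuous functionals $\zeta_m$ legitimate (otherwise $\vp_m(w(t))$ need not even be defined for $w$ ranging over all of $\Omega$), and it is also what lets Steenrod's lemma turn each compact $S(x)$ into a compact subset of an honest Polish path space so that all the compactness and continuity used in the basic case remain available. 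Everything else is a transcription of the proof of Theorem~\ref{thm:main}.
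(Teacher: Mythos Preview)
Your approach is essentially the paper's: assemble one global countable family of functionals, iterate $V_{\zeta}$, and verify measurability stratum by stratum using the confinement $S(X^i)\subset\Omega^{k_i}$ together with Theorem~\ref{thm:meas-max} on each Polish pair $(X^i,\Omega^{k_i})$. The one point the paper handles explicitly and you gloss over is the extension of each $\varphi\in\Phi^j$ to a bounded continuous function on all of $X$: the paper does this by iterated Tietze--Urysohn (each $X^j$ is normal and closed in $X^{j+1}$), and your parenthetical ``it is enough that it is defined and continuous on the relevant strata'' is not quite right---since you run a \emph{single} global sequence $(\zeta_m)$ and apply every $\zeta_m$ to $S^{m-1}(x)$ for \emph{every} $x\in X$, the function $\varphi_m$ must be defined on arbitrarily high strata, so the extension is genuinely needed.
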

\begin{proof}
The proof is essentially the same as that of Theorem~\ref{thm:main}. There are just a few modifications
needed. For $i =1, 2, \dots$, extend every function in $\Phi^i$ to a function on $X$ using the Tietze-Urysohn extension  theorem (going from $X^i$ to $X^{i+1}$ to $X^{i+2}$ etc.) and keeping the bound. The set of extended functions will be called again $\Phi^i$. Define $\Phi$ as the union of all the sets $\Phi^i$.  The functions from $\Phi$ are continuous and bounded on every $X^i$ and separate the points in  $X$. Let $\{(\lambda_n, \vp_n)\}$ be some enumeration of $\Lambda\times\Phi$, where $\Lambda$ is a dense countable subset of $(0, +\infty)$, and define the functionals $\zeta_n$ as in  \eqref{zeta_n}.
Then proceed with the construction of the set-valued map $V_{\zeta}[S]$ as in \eqref{V}.

Note, that for every set $S(x)$, since it is compact in $\Omega$, there is an $i$ such that $S(x)\subset \Omega^i_x$ (and $x\in X^i$, of course).  Hence, $V_{\zeta}[S(x)]\subset \Omega^i_x\subset \Omega_x$.
The properties {\bf S3} and {\bf S4} for $V_{\zeta}[S]$ are verified exactly as in the proof of Theorem~\ref{thm:main}. It remains to check measurability of $V_{\zeta}[S]$. That $S$ is measurable means
\[
\{x\in X: S(x) \cap C \neq\emptyset\}\in {\cal B}_X\,.
\]
Since
\[
\{x\in X: S(x) \cap C \neq\emptyset\} = \cup_i \{x\in X^i: S(x) \cap C \neq\emptyset\}\,,
\]
measurability of $S$ is equivalent to
\[
\{x\in X^i: S(x) \cap C \neq\emptyset\}\in {\cal B}^i\,.
\]
Due to the assumption $S(X^i)\subset \Omega^{k_i}$, we have
\[
\{x\in X^i: S(x) \cap C \neq\emptyset\} = \{x\in X^i: S(x) \cap C \cap \Omega^{k_i}\neq\emptyset\}\,.
\]
Thus, when restricted to $X^i$, the map $S: X^i\to 2^{\Omega^{k_i}}$ is measurable. It follows that
\[
\{x\in X^i: S(x) \cap C \cap \Omega^j\neq\emptyset\} \in {\cal B}^i\,
\]
for any $j$ (for $j < k_i$ use the fact that $C \cap \Omega^j$ is closed in $\Omega^{k_i}$, and for $j > k_i$ use the fact that $C \cap \Omega^j = C \cap \Omega^{k_i}$).
By the measurable maximum Theorem~\ref{thm:meas-max}, $V_\zeta[S]$ viewed as a set-valued map from $X^i$ into compact subsets of $\Omega^{k_i}$ is measurable. Arguing as before, we obtain
\[
\{x\in X^i: V_\zeta[S](x) \cap C \cap \Omega^j\neq\emptyset\} \in {\cal B}^i\,
\]
for every closed set $C\subset X$ and all $i, j \ge 1$, which proves measurability of $V_\zeta[S]$ as a map $X\to 2^\Omega$.

The remaining steps 2 and 3 go as in the proof of Theorem~\ref{thm:main}.

\end{proof}

\bigskip

A representative example of the above construction is as follows. Let $V$ be a reflexive separable Banach space.
Let $X^i$ be a closed ball in $V$ of radius $i$ centered at $0$ and equipped with the weak topology of $V$.
Then, as a set, $X$ is $V$. However, the topology on $X$ is finer than the weak topology of $V$.
In fact, it is the bounded weak topology on $V$. By the Banach-Dieudonn\'e theorem, \cite[Lemma V.5.4]{D-S}, a fundamental system of neighborhoods of the origin of $X$ consists of the sets $\{v\in V: |\langle v^*, v\rangle |< 1, \;v^*\in A^*\}$, where $A^* = \{v^*_n\}$ is a sequence in the dual space $V^*$ converging to zero.
A corollary of this is the fact that $X$ is a locally convex linear topological space.
By Proposition III of \cite{Heisey}, $X$ is regular and Lindel\"of (hence, paracompact), but not first countable (and not metrizable) if $V$ is infinite-dimensional.

The two previous generalization can be combined into the following theorem.

\begin{theorem}\label{thm:conditional-and-inductive}
  Assume that $X$ is an inductive limit of the Polish spaces $X^i$ as in Theorem~\ref{thm:main-2}, and let $S: X \to 2^\Omega$ be a set-valued map satisfying the properties \textbf{S1}, \textbf{S2} and \textbf{S3a}.
  In addition, assume that for every $i$ there is an integer $k_i\ge i$ such that $S(X^i)\subset \Omega^{k_i}$.
  Then, $S$ has a measurable selection with the semigroup property as in Theorem~\ref{thm:conditional-selection}, i.e., $$\U(t_2, \U(t_1, x)) = \U(t_1 + t_2, x)$$ for all $x \in X$, and for all $t_1, t_2 \in \mathcal{T}$, as long as $\U(t_1 + \cdot, x) \in S(\U(t_1))$.
\end{theorem}

\section{Markov selection}\label{sec:Mark}

\subsection{Notation}

\begin{enumerate}
\item[--\ ] $X$ is a Polish space, i.e., $X$ is Hausdorff, separable, and completely metrizable.
\item[--\ ] $\Phi$ is a countable family of bounded continuous functions on $X$ that strongly separates points of $X$ and is closed under multiplication.
\item[--\ ] $\Omega = C([0,+\infty)\to X)$ is the space of continuous, one-sided infinite paths in $X$;   $\Omega$ is equipped with the compact-open topology; it  is separable and metrizable.
\item[--\ ] $\Omega_x = \{w\in\Omega: w(0) = x\}$
\item[--\ ] ${\cal F}$ is the Borel $\sigma$-algebra on $\Omega$.
\item[--\ ] $({\cal F}_t)_{t\ge 0}$ is the (increasing) natural filtration of the measurable space
$(\Omega, {\cal F})$, i.e.,  ${\cal F}_t$ is the smallest $\sigma$-algebra that contains all the sets
of the form
\be\label{basic-sets}
\{w\in \Omega: w(t_1)\in A_1, \dots, w(t_n)\in A_n,\;\text{where}\;\le t_1 < \dots \le t_n\le t,\; A_1,\dots, A_n\in {\cal B}_X\}\,.
\ee
Note that ${\cal F} = {\cal F}_\infty = \sigma\left(\cup_t {\cal F}_t\right)$.
\item[--\ ] $\pi_s : \Omega\to X$ is the standard coordinate map $\pi_s(w) = w(s)$.
\item[--\ ] $\theta_s :\Omega\to\Omega$ is a shift: $(\theta_s w)(\cdot) = w(s + \cdot)$, for $s\ge 0$.
\item[--\ ] $C_b(\Omega)$ is the space of bounded continuous functions on $\Omega$.
\item[--\ ] ${\mathscr P}$ is the space of probability measures on $(\Omega, {\cal F})$.
${\mathscr P}$ is a compact convex set in the space of Borel measures on $\Omega$ with weak$^*$ topology generated by the functionals $P\mapsto P f = \int f(w)\,P(dw)$ with functions $f$ running through $C_b(\Omega)$.
\item[--\ ] The pairing between a measure $Q$ and a functions $f$ on $\Omega$ will be denoted $Q f$ or, in extended form,  $\int f(w)\,Q(dw)$.
\item[--\ ] $\theta_s P$ is the push-forward of the measure $P$ with the map $\theta_s$.
\item[--\ ] $P[\;\cdot\; | {\cal F}_s]$ is the regular conditional probability distribution of $P$ given the $\sigma$-algebra ${\cal F}_s$.
\item[--\ ] Throughout, $P$-a.s. is synonymous to ``for $P$-almost all $w$."
\item[--\ ] Throughout, $\int$ means $\int_\Omega$.
\end{enumerate}

The following lemma defines $P \otimes_s Q$, a standard ingredient in the description of Markov selections, \cite[Theorem 6.1.2]{S-V}.
\begin{lemma}\label{PxQ}
Let $Q_{\bullet}: w \mapsto Q_w \in \mathscr{P}$ be an $\mathcal{F}_s$-measurable map (i.e., for any $f \in C_b(\Omega)$ the map $w \mapsto Q_w f $ is $\mathcal{F}_s$-measurable).  Assume that for every $w$ the probability measure $Q_w$ is supported on the set $\Omega_{w(s)}$.
Then, for any measure $P \in \mathscr{P}$, there is a unique measure, denoted by $P \otimes_s Q$, such that $P \otimes_s Q$ and $P$ agree on $\mathcal{F}_s$, and $P[\theta_s^{-1}(\cdot) | \mathcal{F}_s](w) = Q_w $ for $P\otimes_s Q$-almost all $w\in\Omega$.
\end{lemma}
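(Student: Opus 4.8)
The plan is to construct $P \otimes_s Q$ explicitly as a disintegration and then verify the two defining properties and uniqueness. First I would define, for a bounded measurable $F$ on $\Omega$, the candidate measure by the formula
\[
(P \otimes_s Q)(F) = \int_\Omega \left( \int_\Omega F\bigl(w \bowtie_s w'\bigr)\, Q_w(dw') \right) P(dw),
\]
where $w \bowtie_s w'$ is the splicing defined in Section~\ref{sec:semi} (note that $Q_w$ lives on $\Omega_{w(s)}$, so $w(s) = w'(0)$ and the splicing is well defined; equivalently one may write $F(w \bowtie_s w') $ in terms of $w$ on $[0,s]$ and $\theta_s^{-1}$-pulled-back data of $w'$). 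The inner integral is a bounded function of $w$; I would check it is measurable by a monotone class argument, starting with $F$ depending on finitely many coordinates and using the $\mathcal{F}_s$-measurability of $w \mapsto Q_w$ together with continuity/measurability of splicing. This gives a well-defined probability measure on $(\Omega, \mathcal{F})$.

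Next I would verify the two required properties. Agreement on $\mathcal{F}_s$: if $F$ is $\mathcal{F}_s$-measurable then $F(w \bowtie_s w')$ depends only on the restriction of $w$ to $[0,s]$, hence equals $F(w)$ (since the splice agrees with $w$ on $[0,s]$), so the inner integral is just $F(w)$ and $(P\otimes_s Q)(F) = P(F)$; more precisely one checks this on the generating sets \eqref{basic-sets} with $t_n \le s$ and extends. The conditional-distribution property: I need $(P\otimes_s Q)[\theta_s^{-1}(\cdot)\mid \mathcal{F}_s](w) = Q_w$ for $(P\otimes_s Q)$-a.e.\ $w$, equivalently for $P$-a.e.\ $w$ since the two measures agree on $\mathcal{F}_s$. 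For this I would show that for every $G \in \mathcal{F}_s$ and every bounded measurable $H$ on $\Omega$,
\[
(P \otimes_s Q)\bigl( \one_G \cdot (H \circ \theta_s) \bigr) = (P\otimes_s Q)\bigl( \one_G \cdot (w \mapsto Q_w H) \bigr),
\]
which, plugging in the defining formula and using that $\theta_s(w \bowtie_s w') = w'$ and $\one_G(w \bowtie_s w') = \one_G(w)$, reduces to the tautology $\int \one_G(w) \int H(w') Q_w(dw')\, P(dw) = \int \one_G(w) (Q_w H)\, P(dw)$. Since $w \mapsto Q_w H$ is $\mathcal{F}_s$-measurable, this is exactly the defining relation of regular conditional probability.

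For uniqueness: any measure $R$ with the stated properties must satisfy, for $G \in \mathcal{F}_s$ and bounded measurable $H$, $R(\one_G \cdot (H\circ\theta_s)) = R(\one_G \cdot (w \mapsto Q_w H)) = P(\one_G \cdot (w \mapsto Q_w H))$ using first the conditioning property and then agreement on $\mathcal{F}_s$; and the map $w \mapsto w \bowtie_s \theta_s w$ is the identity, while the joint law of $(w|_{[0,s]}, \theta_s w)$ under $R$ is thereby pinned down. Since $\mathcal{F}$ is generated by $\mathcal{F}_s$ together with $\{\theta_s^{-1}(A): A \in \mathcal{F}\}$ (every coordinate $\pi_t$ is either $\pi_t$ for $t \le s$, measurable w.r.t.\ $\mathcal{F}_s$, or $\pi_{t-s}\circ\theta_s$ for $t > s$), a monotone class argument shows $R$ is determined on all of $\mathcal{F}$, hence $R = P \otimes_s Q$. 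The main obstacle I anticipate is purely bookkeeping rather than conceptual: making the measurability of $w \mapsto \int F(w\bowtie_s w')\,Q_w(dw')$ rigorous for general bounded measurable $F$ (as opposed to continuous $F$), which requires the monotone class theorem applied carefully to the product structure, and being careful that the regular conditional probability $P[\theta_s^{-1}(\cdot)\mid\mathcal{F}_s]$ exists (guaranteed since $\Omega$ is Polish) and that the $P$-null set where the identity fails does not interfere.
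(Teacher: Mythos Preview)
The paper does not give its own proof of this lemma; it simply states the result as a standard fact and cites \cite[Theorem~6.1.2]{S-V}. Your construction via the splicing formula
\[
(P\otimes_s Q)(F)=\int_\Omega\!\int_\Omega F(w\bowtie_s w')\,Q_w(dw')\,P(dw)
\]
is exactly the standard one found in that reference, and the verification of the two defining properties and of uniqueness that you outline is correct and complete in substance. The only point worth tightening is the measurability step: rather than starting with finitely-coordinate functions, it is cleaner to observe that $(w,w')\mapsto w\bowtie_s w'$ is continuous from $\{(w,w'):w'(0)=w(s)\}$ into $\Omega$, so for $F\in C_b(\Omega)$ the inner integral is manifestly $\mathcal{F}$-measurable in $w$ (indeed $\mathcal{F}_s$-measurable, since $Q_\bullet$ is and the splice depends on $w$ only through $w|_{[0,s]}$), and then a monotone class argument extends to all bounded measurable $F$ as you indicate.
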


\subsection{Statement of the Markov selection theorem}\label{sub:statement}

We start with the following Markov selection theorem.
\begin{theorem}\label{thm:Markov}
  Let $\mathscr C$ be a set-valued map ${\mathscr C}: X\to 2^{\mathscr P}$
such that
\begin{description}
\item[C1] ${\mathscr C}(x)$ is a non-empty compact convex subset of ${\mathscr P}$ for every $x\in X$;
\item[C2] the map ${\mathscr C}$ is measurable in the sense that
\[
\{x\in X: {\mathscr C}(x)\cap K \neq\emptyset\}\in {\cal B}_X
\]
for every closed subset $K\subset {\mathscr P}$.

\item[C3] $P(\Omega_x) = 1$ for every measure $P$ in ${\mathscr C}(x)$, for every $x\in X$.
\end{description}
In addition, assume that $\mathscr{C}$ satisfies the following conditions:
  \begin{description}
  \item[MP1] For $x \in X$, if $P \in \mathscr{C}(x)$ and $s \geq 0$, then $$P[\theta_s^{-1}(\cdot) | \mathcal{F}_s](w) \in \mathscr{C}(w(s)) \quad (P,{\cal F}_s)\text{-a.s.}\,.$$
  \item[MP2] If $s \geq 0$, and if $Q$ is an $\mathcal{F}_s$-measurable selection of $w \mapsto \mathscr{C}(w(s))$, then $$P \otimes_s Q \in \mathscr{C}(x)\quad\text{provided}\; P\in {\mathscr C}(x).$$
  \end{description}
Then, there exists a measurable selection $P_x \in \mathscr{C}(x)$, $x\in X$,  such that  for any $x$, and $s \geq 0$, $$P_x[\theta_s^{-1}(\cdot) | \mathcal{F}_s](w) = P_{w(s)} \quad (P_x,{\cal F}_s)\text{-a.s.}\,.$$
\end{theorem}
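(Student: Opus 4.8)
The plan is to mimic the structure of the semiflow selection proof (Theorem~\ref{thm:main}): build a countable family of ``Laplace-transform'' functionals on $\mathscr P$, successively pass to the sets of maximizers, take the decreasing intersection, and show the resulting set-valued map has singleton values; the unique selection is then the desired Markov selection. Concretely, enumerate $\Lambda\times\Phi$ as $\{(\lambda_n,\varphi_n)\}$ with $\Lambda$ a countable dense subset of $(0,+\infty)$, and for $P\in\mathscr P$ define
\[
I_n(P) \;=\; \int_\Omega \Bigl(\int_0^\infty e^{-\lambda_n t}\varphi_n(w(t))\,dt\Bigr)\,P(dw)\;=\;\int_0^\infty e^{-\lambda_n t}\,(P\pi_t)(\varphi_n)\,dt\,.
\]
Each $I_n$ is affine and weak$^*$-continuous on $\mathscr P$ (the integrand is a bounded continuous function of $w$), so on each compact convex $\mathscr C(x)$ it attains its maximum on a \emph{non-empty compact convex} face $V_n[\mathscr C(x)]$. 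Set $\mathscr C^0=\mathscr C$, $\mathscr C^{n+1}(x)=V_{n+1}[\mathscr C^n(x)]$, and $\mathscr C^\infty(x)=\bigcap_n\mathscr C^n(x)$, which is non-empty, compact, convex.

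The crux is that each $\mathscr C^n$ (hence $\mathscr C^\infty$) again satisfies \textbf{C1}--\textbf{C3} and \textbf{MP1}--\textbf{MP2}. Conditions \textbf{C1} and \textbf{C3} are immediate; measurability \textbf{C2} of $V_\zeta[\mathscr C]$ follows from the measurable maximum theorem (Theorem~\ref{thm:meas-max}) applied to the Carath\'eodory function $(x,P)\mapsto I_n(P)$ and the weakly-measurable compact-valued map $\mathscr C$, using that the intersection of countably many closed-measurable maps is closed-measurable. The heart of the argument is the hereditary character of \textbf{MP1} and \textbf{MP2}, and here the splicing identity is played by the operation $P\otimes_s Q$ from Lemma~\ref{PxQ}. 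The key computation: for $P\in\mathscr C(x)$, writing $Q_w = P[\theta_s^{-1}(\cdot)\mid\mathcal F_s](w)$ one has the disintegration
\[
I_n(P)\;=\;\int_0^s e^{-\lambda_n t}(P\pi_t)(\varphi_n)\,dt \;+\; e^{-\lambda_n s}\int_\Omega I_n(Q_w)\,P(dw)\,,
\]
because $\int_s^\infty e^{-\lambda_n t}\varphi_n(w(t))\,dt = e^{-\lambda_n s}\int_0^\infty e^{-\lambda_n t}\varphi_n((\theta_s w)(t))\,dt$ and $\mathcal F_s$-conditioning turns the outer integral of $\varphi_n(w(t))$ for $t\le s$ into a functional measurable w.r.t.\ $\mathcal F_s$ that agrees on $P$ and on $P\otimes_s Q$. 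Thus if $P$ maximizes $I_n$ over $\mathscr C(x)$: given any $\mathcal F_s$-measurable selection $Q'_\bullet$ of $w\mapsto\mathscr C(w(s))$, the measure $P\otimes_s Q'\in\mathscr C(x)$ (by \textbf{MP2}) has first term unchanged and so $\int I_n(Q'_w)\,P(dw)\le\int I_n(Q_w)\,P(dw)$; taking $Q'_w$ to be (a measurable selection of) a maximizer of $I_n$ in $\mathscr C(w(s))$ forces $I_n(Q_w)$ to equal that maximum for $P$-a.e.\ $w$, i.e.\ $Q_w\in V_n[\mathscr C(w(s))]$ $P$-a.s.; this is \textbf{MP1} for $\mathscr C^{n}$ at the $(n)$-th step, and the argument propagates since the earlier constraints $I_1,\dots,I_{n-1}$ are already ``frozen'' on $\mathscr C^{n-1}$. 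Symmetrically, \textbf{MP2} for $V_n[\mathscr C]$: if $P\in V_n[\mathscr C(x)]$ and $Q'$ is an $\mathcal F_s$-measurable selection of $w\mapsto V_n[\mathscr C(w(s))]$, then $P\otimes_s Q'\in\mathscr C(x)$ by \textbf{MP2} for $\mathscr C$, and the displayed identity gives $I_n(P\otimes_s Q')=I_n(P)=\max$, so $P\otimes_s Q'\in V_n[\mathscr C(x)]$. One then iterates over $n$ and passes to $\mathscr C^\infty$ (a countable intersection, so still closed-measurable, with \textbf{MP1}/\textbf{MP2} preserved in the limit by a routine ``for all $n$'' argument and the stability of a.s.\ statements under countable intersections).

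Finally, uniqueness: if $P,P'\in\mathscr C^\infty(x)$, then $I_n(P)=I_n(P')$ for all $n$, i.e.\ $\int_0^\infty e^{-\lambda t}(P\pi_t)(\varphi)\,dt=\int_0^\infty e^{-\lambda t}(P'\pi_t)(\varphi)\,dt$ for all $\lambda\in\Lambda$ and $\varphi\in\Phi$; injectivity of the Laplace transform gives $(P\pi_t)(\varphi)=(P'\pi_t)(\varphi)$ for all $t\ge 0$, $\varphi\in\Phi$, hence $P\pi_t=P'\pi_t$ for every $t$ since $\Phi$ strongly separates points and is multiplicatively closed (so it is measure-determining on $X$). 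Matching all one-dimensional marginals is not yet enough to conclude $P=P'$, so here I would invoke \textbf{MP1} for $\mathscr C^\infty$ together with the just-proved equality of marginals to upgrade to equality of all finite-dimensional distributions by induction on the number of time points (conditioning at the largest time via $P[\theta_{t_n}^{-1}(\cdot)\mid\mathcal F_{t_n}]$, whose values lie in the singletons $\mathscr C^\infty(w(t_n))$ which are therefore determined); since $\mathcal F$ is generated by the cylinder sets, $P=P'$. Thus $\mathscr C^\infty(x)=\{P_x\}$, the map $x\mapsto P_x$ is the unique (hence measurable, by Kuratowski--Ryll-Nardzewski) selection of $\mathscr C^\infty$, and \textbf{MP1} for $\mathscr C^\infty$ reads precisely $P_x[\theta_s^{-1}(\cdot)\mid\mathcal F_s](w)=P_{w(s)}$ $(P_x,\mathcal F_s)$-a.s.

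\textbf{Main obstacle.} The delicate point is the hereditary step for \textbf{MP1}: turning ``$P$ maximizes $I_n$'' into the \emph{pointwise a.s.} statement $Q_w\in V_n[\mathscr C(w(s))]$. This requires (i) a measurable maximizer-selection $w\mapsto$ (an element of) $V_n[\mathscr C(w(s))]$ that is $\mathcal F_s$-measurable and supported on $\Omega_{w(s)}$, so that \textbf{MP2} applies to form $P\otimes_s Q'$ — this is where Strassen's selection theorem and the measurable maximum theorem enter — and (ii) the elementary but essential fact that an integrable function which is $\le$ its pointwise maximum everywhere and has integral equal to the integral of that maximum must equal it a.e. Care is also needed that the ``frozen'' lower-order functionals $I_1,\dots,I_{n-1}$ remain constant under the replacement $Q_w\rightsquigarrow Q'_w$, which is exactly why the construction must be done one functional at a time rather than all at once. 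I expect the finite-dimensional-distribution upgrade in the uniqueness step to be the second nontrivial piece, again relying on \textbf{MP1} rather than on marginals alone.
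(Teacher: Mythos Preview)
Your overall strategy---Laplace-type functionals $I_n$, successive maximization, hereditary \textbf{MP1}/\textbf{MP2}, then uniqueness via marginals plus conditioning---is exactly the paper's, and your hereditary arguments for \textbf{MP1} and \textbf{MP2} are correct. The paper organizes the same proof differently: it first abstracts to ``Krylov maps'' satisfying properties \textbf{KP1}/\textbf{KP2} formulated via Strassen's theorem and the auxiliary sets $K(P,s,\mathscr C)$ and $\Gamma[P,x,s](\mathscr C)$, proves a commutation lemma $V_\eta[K(P,s,\mathscr C)]=K(P,s,V_\eta[\mathscr C])$, and only afterwards checks that \textbf{MP1}/\textbf{MP2} imply \textbf{KP1}/\textbf{KP2}. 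Your direct route avoids the Strassen/support-function machinery at the cost of being slightly less general (the paper notes \textbf{KP2} is strictly weaker than \textbf{MP2}); for the theorem as stated the two are equivalent.

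There is one genuine slip in your uniqueness paragraph. You write that the conditionals ``lie in the singletons $\mathscr C^\infty(w(t_n))$ which are therefore determined''---but this is exactly what you are trying to prove, so as written it is circular. The correct argument (and what the paper does) uses only that the \emph{one-dimensional} marginals of any element of $\mathscr C^\infty(y)$ are determined, for \emph{every} $y\in X$ simultaneously; from this one defines transition probabilities $p_t(y,\cdot)$ and shows by induction on the number of time points that all finite-dimensional distributions of any $P\in\mathscr C^\infty(x)$ are expressible purely through the $p_t$'s. For the inductive step one should condition at the \emph{smallest} time $t_1$ (not the largest): \textbf{MP1} gives $P[\theta_{t_1}^{-1}(\cdot)\mid\mathcal F_{t_1}](w)\in\mathscr C^\infty(w(t_1))$, and by the inductive hypothesis the $(k{-}1)$-dimensional marginals of this conditional at the shifted times $t_2-t_1,\dots,t_k-t_1$ are already determined, independently of which element of $\mathscr C^\infty(w(t_1))$ one lands in. Once you phrase it this way there is no circularity.
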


This statement was inspired by the presentation of F. Flandoli and M. Romito in \cite[Section 2]{F-R}, where they prove this theorem (though their assumptions are somewhat different and more restrictive). We prove Theorem~\ref{thm:Markov} in Section~\ref{proof:Markov} by showing that it is a corollary  of a more general abstract selection theorem.
Our abstract theorem is close in spirit to Krylov's treatment in \cite{Krylov}.

In the statement of the theorem and below we write $(P,{\cal F}_s)\text{-a.s.}$ to mean
``outside of (possibly) a $P$-null set in ${\cal F}_s$."


\subsection{Formulation of an abstract selection theorem}

Let ${\mathscr C}$ will be our set-valued map with properties {\bf C1 - C3}.
As with most selection theorems, the proof of theorem \ref{thm:Markov} proceeds
with successive {\it reductions} of ${\mathscr C}$ to set-valued maps $X\to 2^{\mathscr P}$ with smaller, nested images that in the limit will become singletons and the resulting single-valued map will have the desired properties, this will be our selection. Reductions will be achieved by selecting measures that maximize certain linear functionals. It is convenient to formalize this process.
\bigskip

\noindent$\bullet$\ {\bf Markov kernels and Strassen's theorem.}

Let $(\Omega, \Xi)$ and $(R, \Sigma)$ be  measurable spaces. Recall, that a Markov kernel from $\Omega$ to $R$ is a real function ${\mathfrak  m}$ on $\Sigma\times \Omega$ such that for any $w\in \Omega$,
${\mathfrak m}(\cdot, w)$ is a probability measure on $\Sigma$, and for any $A\in\Sigma$, the map
$w\mapsto {\mathfrak m}(A, w)$ is $\Xi$-measurable. If $P$ is a probability measure on $\Xi$, then, by definition,
${\mathfrak m}\cdot P$ is a probability measure on $\Sigma$ such that
\[
({\mathfrak m}\cdot P)\, (A) = \int {\mathfrak m}(A, w)\,P(dw)\,.
\]
Assume now that the space $R$ is Polish and that $\Sigma$ is its Borel $\sigma$-algebra. The following result of V. Strassen, \cite[Theorem 3]{Strassen}, will be very useful to us.
\begin{theorem}[Strassen]
Let $C : w\mapsto C_w$ be a set-valued map on $\Omega$ with values in nonempty convex compact sets of
Borel probability measures on $R$. Assume that this map is measurable in the sense that the corresponding map from $\Omega$ to the support functions $H_w$ of the sets $C_w$ is $\Xi$-measurable.
Let $P$ be a probability measure on $\Omega$ and let $Q$ be a probability measure on $R$.
In order that there exist a Markov kernel ${\mathfrak m}$ from $\Omega$ to $R$ such that
\[
{\mathfrak m}\cdot P = Q
\]
and
\[
{\mathfrak m}(\,\cdot\,, w)\in C_w\quad\text{for $P$-almost all $w$},
\]
it is necessary and sufficient that
\[
\int_R g(r)\,Q(dr) \le \int_\Omega H_w[g]\,P(dw)
\]
for all $g\in C_b(R)$.
\end{theorem}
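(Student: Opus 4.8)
\medskip

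\noindent\emph{Proof proposal.}\ The plan is to dispose of necessity in one line and to prove sufficiency by a Hahn--Banach extension followed by a fibrewise disintegration. Necessity is immediate: if a kernel $\mathfrak m$ as in the statement exists, then $\int_R g\,d\mathfrak m(\cdot,w)\le H_w[g]$ for $P$-a.e.\ $w$ by the very definition of the support function of $C_w$, and integrating this against $P$ gives $\int_R g\,dQ=\int_\Omega\bigl(\int_R g\,d\mathfrak m(\cdot,w)\bigr)P(dw)\le\int_\Omega H_w[g]\,P(dw)$ for every $g\in C_b(R)$. For sufficiency I would first reduce to the case in which $R$ is compact metric: embed $R$ homeomorphically into a compact metric space $\bar R$; the sets $C_w$ stay nonempty, convex and compact in $\mathscr P(\bar R)$, the support functions and the standing inequality are unaffected by restricting test functions to $C_b(R)$, and any kernel into $\bar R$ with fibres in the $C_w$ is automatically supported in $R$. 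From now on $R$ is compact, so $C(R)$ is separable.

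Let $\mathcal A$ be the linear span of the functions $\mathbf 1_A(w)\,g(r)$ on $\Omega\times R$, $A\in\Xi$, $g\in C(R)$, and for such tensor-type functions write $\mathbf 1_A\otimes g$. On $\mathcal A$ define the sublinear functional
\[
q(h)=\int_\Omega H_w\bigl[h(w,\cdot)\bigr]\,P(dw)=\int_\Omega\sup_{\mu\in C_w}\int_R h(w,r)\,\mu(dr)\;P(dw);
\]
the integrand is $\Xi$-measurable on $\mathcal A$ because the measurability assumption on $C$ makes $w\mapsto H_w[g]$ $\Xi$-measurable for each $g$, and $q$ is sublinear because each $H_w$ is. On the subspace $\{\mathbf 1_\Omega\otimes g:g\in C(R)\}$ define the linear functional $\ell_0(\mathbf 1_\Omega\otimes g)=\int_R g\,dQ$; the hypothesis $\int g\,dQ\le\int H_w[g]\,dP$ says exactly that $\ell_0\le q$ there. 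By Hahn--Banach extend $\ell_0$ to a linear functional $\ell$ on $\mathcal A$ with $\ell\le q$ everywhere. Then $\ell$ is positive (for $h\ge 0$, $\ell(-h)\le q(-h)=-\int_\Omega\inf_{\mu\in C_w}\int_R h(w,\cdot)\,d\mu\,P(dw)\le 0$); combining positivity with $\ell(\mathbf 1_A\otimes 1)\le q(\mathbf 1_A\otimes 1)=P(A)$ and $\ell(\mathbf 1_A\otimes 1)+\ell(\mathbf 1_{A^c}\otimes 1)=\ell(\mathbf 1_\Omega\otimes 1)=1$ yields $\ell(\mathbf 1_A\otimes 1)=P(A)$ for all $A\in\Xi$; and $\ell(\mathbf 1_\Omega\otimes g)=\int_R g\,dQ$.

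Next I would disintegrate $\ell$ into a kernel. For fixed $g\in C(R)$ the set function $A\mapsto\ell(\mathbf 1_A\otimes g)$ is finitely additive and pinched between $\pm\|g\|_\infty P(A)$, hence countably additive because $P$ is, so there is $\phi_g\in L^\infty(\Omega,P)$ with $\ell(\mathbf 1_A\otimes g)=\int_A\phi_g\,dP$ for all $A$, with $\phi_g\le H_w[g]$ $P$-a.e.\ (from $\ell\le q$) and $\phi_1=1$ $P$-a.e.; moreover $g\mapsto\phi_g$ is additive and positive up to $P$-null sets. Choose a countable, $\mathbb Q$-linear, uniformly dense $\mathcal D\subset C(R)$ with $1\in\mathcal D$, and let $N$ be the union of the countably many $P$-null sets on which $\mathbb Q$-linearity, positivity, $\phi_1=1$, or $\phi_g\le H_w[g]$ ($g\in\mathcal D$) fails. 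For $w\notin N$, $g\mapsto\phi_g(w)$ is a monotone $\mathbb Q$-linear functional on $\mathcal D$ of norm one with value $1$ at $1$; it extends uniquely to a positive norm-one linear functional on $C(R)$, so by the Riesz representation theorem it equals $g\mapsto\int_R g\,d\mathfrak m(\cdot,w)$ for a unique $\mathfrak m(\cdot,w)\in\mathscr P(R)$, and passing to uniform limits from $\mathcal D$ (using that each $H_w$ is $1$-Lipschitz) gives $\int_R g\,d\mathfrak m(\cdot,w)\le H_w[g]$ for all $g\in C(R)$, i.e.\ $\mathfrak m(\cdot,w)\in C_w$ by the support-function description of that convex compact set. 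On $N$ patch $\mathfrak m(\cdot,w)$ with a measurable selection of $w\mapsto C_w$, available by the measurability assumption on $C$ together with the Kuratowski--Ryll-Nardzewski theorem. Measurability of $w\mapsto\phi_g(w)$ for $g\in\mathcal D$ plus uniform approximation show $w\mapsto\mathfrak m(\cdot,w)$ is $\Xi$-measurable into the Polish space $\mathscr P(R)$, so $\mathfrak m$ is a Markov kernel; and $\int_R g\,d(\mathfrak m\cdot P)=\int_\Omega\phi_g\,dP=\ell(\mathbf 1_\Omega\otimes g)=\int_R g\,dQ$ for all $g\in C(R)$ forces $\mathfrak m\cdot P=Q$.

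The hard part is this last paragraph. Hahn--Banach only produces a finitely additive functional $\ell$, and converting it into an honest measure-valued kernel requires (i) the coordinatewise upgrade to countable additivity via domination by $\|g\|_\infty P(\cdot)$; (ii) handling all test functions at once through a countable dense family, so that a single $P$-null exceptional set suffices; and (iii) the support-function characterization of convex compact sets of probability measures, which is what pins each fibre $\mathfrak m(\cdot,w)$ inside $C_w$. The measurability bookkeeping---that $w\mapsto H_w[g]$ is measurable and that the repair on the exceptional null set is measurable---is precisely what the measurability hypothesis and the selection theorems already invoked in the paper are there to supply.
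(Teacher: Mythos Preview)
The paper does not prove this theorem; it merely quotes it from Strassen's original article \cite{Strassen} and uses it as a black box. So there is no ``paper's own proof'' to compare against.

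Your argument is correct and is, in outline, the classical one: Hahn--Banach extension dominated by the averaged support functional, followed by a Radon--Nikodym/Riesz disintegration along a countable dense family in $C(R)$ to manufacture the kernel, with the support-function characterization of compact convex sets pinning each fibre inside $C_w$. The reduction to compact $R$, the positivity and marginal identifications $\ell(\mathbf 1_A\otimes 1)=P(A)$, the upgrade of $A\mapsto\ell(\mathbf 1_A\otimes g)$ from finite to countable additivity via the bound $|\ell(\mathbf 1_A\otimes g)|\le\|g\|_\infty P(A)$, and the single-null-set trick are all standard and handled correctly. One small point worth making explicit in a final write-up: when you check measurability of $w\mapsto H_w[h(w,\cdot)]$ for $h\in\mathcal A$, you are implicitly rewriting $h$ over the atoms of the finite algebra generated by the $A_i$, on each of which $h(w,\cdot)$ is a fixed element of $C(R)$; this is fine but deserves a sentence. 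Likewise, the patch on the null set $N$ by a measurable selector of $w\mapsto C_w$ is legitimate, though strictly speaking the theorem only asks for $\mathfrak m(\cdot,w)\in C_w$ for $P$-almost all $w$, so any fixed probability measure would do there.
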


\noindent$\bullet$\ {\bf Maximization operations $V_\eta$.}\

Let $\eta$ be a linear continuous functional on ${\mathscr P}$ and let $K$ be a compact convex subset of ${\mathscr P}$.
Denote by $V_\eta[K]$ the following compact convex subset of $K$:
\be
V_\eta[K] = \{P\in K: \eta(P) = \sup_{Q\in K} \eta(Q)\}\,.
\ee
By extension, $V_\eta[{\mathscr C}]$ will denote the set-valued map $x\mapsto V_\eta[{\mathscr C}(x)]$.
\medskip

\noindent$\bullet$\ {\bf Set-valued maps $K(P, s, {\mathscr C})$.}

Denote by $h_x$ the support functions of the sets ${\mathscr C}(x)$. By definition,
\[
h_x[f] = \sup\,\{P f\,,\;P\in {\mathscr C}(x)\}
\]
for any $f\in C_b(\Omega)$. As a support function of a compact convex set,
$h_x$ is continuous, convex, and degree one positive homogeneous.
Measurability of ${\mathscr C}$ implies that, for every $f\in C_b(\Omega)$, the map $w\to h_{w(s)}[f]$ is $\sigma(\pi_s)$-measurable.
For a probability measure $P\in {\mathscr P}$ and a number $s\ge 0$,  define the set
\be\label{eq:K}
K(P, s, {\mathscr C}) = \{Q\in {\mathscr P}:\; Q f \le \int h_{w(s)}[f]\,P(dw) \,,\;\forall f\in C_b(\Omega, {\cal F}_s)\}\,.
\ee
Note that $\int h_{w(s)}[\cdot]\,P(dw)$ is the support function of $K(P, s, {\mathscr C})$.
This non-empty, convex, compact set depends on ${\mathscr C}$ through the support function $h_{w(s)}$ of ${\mathscr C}(w(s))$.
By Strassen's theorem, for every $Q\in K(P, s, {\mathscr C})$ there exists a
Markov kernel $Q_w(dv)$ such that $Q_w\in {\mathscr C}(w(s))$ for $P$-almost all $w$,
and $Q = Q_w\cdot P$, i.e.,
\be
Q f = \int \left(Q_w f\right)\,P(dw)
\ee
for every function $f\in C_b(\Omega, {\cal F}_s)$.


The following lemma shows that the correspondence ${\mathscr C}\to K(P, s, {\mathscr C})$ commutes with the maximization operators $V_\eta$.
\begin{lemma}\label{commute}
For any linear continuous functional $\eta$ on ${\mathscr P}$,
\be
V_\eta[K(P, s, {\mathscr C})] = K(P, s, V_\eta[{\mathscr C}])\,.
\ee
\end{lemma}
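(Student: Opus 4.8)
The plan is to prove the two set inclusions $V_\eta[K(P,s,{\mathscr C})]\subseteq K(P,s,V_\eta[{\mathscr C}])$ and the reverse, using the explicit description of $K(P,s,{\mathscr C})$ through support functions. Throughout, I write $h_x$ for the support function of ${\mathscr C}(x)$ and $g_x$ for the support function of $V_\eta[{\mathscr C}(x)]$. The key elementary fact about the maximization operator is the following description of the support function of $V_\eta[K]$ for a compact convex set $K$ with support function $H$: one has $V_\eta[K]=\{P\in K: \eta(P)=H[\eta]\}$ (identifying the continuous linear functional $\eta$ with the element of $C_b(\Omega)$ representing it), and its support function is $H^\eta[f]=\lim_{r\to\infty}\bigl(H[f+r\eta]-rH[\eta]\bigr)$, the directional-type derivative of $H$ at $\eta$ in the direction $f$. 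Equivalently, $P\in V_\eta[K]$ iff $Pf\le H^\eta[f]$ for all $f\in C_b(\Omega)$. I would first record this lemma (it is standard convex analysis on the compact convex set $K\subset{\mathscr P}$), and verify that the limit exists because $r\mapsto H[f+r\eta]-rH[\eta]$ is nonincreasing and bounded below by $\eta(P_0)f$ for any $P_0\in V_\eta[K]$, whose existence follows from compactness.

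Next I would apply this to $K=K(P,s,{\mathscr C})$, whose support function is $H[f]=\int h_{w(s)}[f]\,P(dw)$ for $f\in C_b(\Omega,{\cal F}_s)$ (as noted in the excerpt, with the understanding that $H$ is $+\infty$ off $C_b(\Omega,{\cal F}_s)$, or one works only with ${\cal F}_s$-measurable test functions, which is the natural ambient space here). The candidate for the support function of $V_\eta[K(P,s,{\mathscr C})]$ is then $H^\eta[f]=\lim_r\bigl(\int h_{w(s)}[f+r\eta]\,P(dw)-r\int h_{w(s)}[\eta]\,P(dw)\bigr)$. The crucial step is to pull the limit inside the integral: since $r\mapsto h_{w(s)}[f+r\eta]-r h_{w(s)}[\eta]$ is, for each fixed $w$, a nonincreasing bounded-below sequence, monotone convergence (applied to $r h_{w(s)}[\eta]-h_{w(s)}[f+r\eta]$, which is nondecreasing, after subtracting a fixed integrable lower bound) gives
\[
H^\eta[f]=\int \Bigl(\lim_{r\to\infty}\bigl(h_{w(s)}[f+r\eta]-r h_{w(s)}[\eta]\bigr)\Bigr)\,P(dw)=\int (h_{w(s)})^\eta[f]\,P(dw)=\int g_{w(s)}[f]\,P(dw),
\]
where $g_{w(s)}=(h_{w(s)})^\eta$ is exactly the support function of $V_\eta[{\mathscr C}(w(s))]$ by the first lemma applied pointwise. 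But $\int g_{w(s)}[f]\,P(dw)$ is precisely the support function of $K(P,s,V_\eta[{\mathscr C}])$. Since two compact convex subsets of ${\mathscr P}$ with the same support function coincide, this proves the lemma.

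I would also need to check two measurability/consistency side points so the objects involved are legitimate: that $w\mapsto g_{w(s)}[f]$ is $\sigma(\pi_s)$-measurable (it is, as a pointwise limit of the $\sigma(\pi_s)$-measurable functions $w\mapsto h_{w(s)}[f+r\eta]-r h_{w(s)}[\eta]$), and that $V_\eta[{\mathscr C}]$ still has compact convex values with support function $g_x$, so that $K(P,s,V_\eta[{\mathscr C}])$ makes sense; both are immediate. The main obstacle is the interchange of the limit in $r$ with the integral over $P$: one must be careful that the integrand is genuinely monotone in $r$ (which follows from convexity and positive homogeneity of $h_{w(s)}$ — the difference quotient $\frac{1}{r}(H[rf+r\eta]-H[r\eta])$ type manipulation) and uniformly bounded below by an integrable function independent of $r$ (take $\eta(Q_0)[f]$ for a fixed measurable selection $Q_0(w)\in{\mathscr C}(w(s))$, which exists by measurable selection and is bounded since $f$ is). Once monotone convergence is justified, the rest is bookkeeping with support functions and the Hahn-Banach-type fact that compact convex sets in ${\mathscr P}$ are determined by their support functions over $C_b(\Omega)$.
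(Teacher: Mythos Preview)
Your approach is correct and takes a genuinely different route from the paper's. The paper argues both inclusions separately via Strassen's theorem: it disintegrates each $Q\in K(P,s,{\mathscr C})$ into a Markov kernel $Q_w\in{\mathscr C}(w(s))$ and compares $\eta(Q_w)$ pointwise against a maximizing kernel; one inclusion is direct, the other proceeds by contradiction on a set of positive $P$-measure where the kernel fails to maximize $\eta$. Your support-function argument, based on the exposed-face identity $\sigma_{V_\eta[K]}(f)=\lim_{r\to\infty}\bigl(\sigma_K(f+r\eta)-r\,\sigma_K(\eta)\bigr)$ and a monotone-convergence interchange, bypasses disintegration entirely and handles both inclusions in one stroke; it is cleaner and makes the convex-geometric content explicit. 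Two small corrections: your hedging about restricting to $C_b(\Omega,{\cal F}_s)$ is misdirected, since the functionals $\eta=\zeta_n$ actually used are \emph{not} ${\cal F}_s$-measurable; you need $H[f]=\int h_{w(s)}[f]\,P(dw)$ to be the support function of $K(P,s,{\mathscr C})$ on all of $C_b(\Omega)$, which it is (sublinearity of $H$ plus $H[\pm 1]=\pm 1$ already forces the constraint set into ${\mathscr P}$, and then $H$ is its own biconjugate). Also, your lower bounds should read $P_0 f$ and $Q_0(w)f$ rather than $\eta(P_0)f$ and $\eta(Q_0)[f]$; in fact the uniform lower bound $-\|f\|_\infty$ suffices for the monotone-convergence step without invoking a measurable selection.
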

\begin{proof} If $Q\in K(P, s, V_\eta[{\mathscr C}])$, there is a Markov kernel $Q_w$ such that
$Q_w\in V_\eta[{\mathscr C}(w(s))]$ $P$-a.s., and $Q_w$ is an $\eta$-maximizer in ${\mathscr C}(w(s))$. If $Q^\prime\in V_\eta[K(P, s, {\mathscr C})]$, then
$Q^\prime\in K(P, s, {\mathscr C})$, and hence, there is a Markov kernel $Q^\prime_w$ such that
$Q^\prime_w \in {\mathscr C}(w(s))$ $P$-a.s..
Because $Q_w$ is an $\eta$-maximizer,
$\eta(Q_w) \ge \eta(Q_w^\prime)$,
and then,
\[
\eta(Q) = \int \eta(Q_w)\,P(dw) \ge \int \eta(Q_w^\prime)\,P(dw) = \eta(Q^\prime)\,.
\]
This implies $Q\in V_\eta[K(P, s, {\mathscr C})]$.
Now assume $Q\in V_\eta[K(P, s, {\mathscr C})]$. Then
\be\label{six}
\eta(Q) \ge \eta(Q^\prime)\,,\quad \forall Q^\prime\in K(P, s, {\mathscr C})\,.
\ee
 There exists a Markov kernel $Q_w$ such that
$Q_w\in {\mathscr C}(w(s))$\ \  $P$-a.s. and
\[
\eta(Q) = \int \eta(Q_w)\;P(dw)\,.
\]
If $Q_w\in V_\eta[{\mathscr C}(w(s))]$\ \  $P$-a.s., we are done. If not, write $\Omega$ as $N_0\cup N_1\cup N_2$, where $P(N_0) = 0$, $Q_w\in V_\eta[{\mathscr C}(w(s))]$ for $w\in N_1$,  and
$Q_w\notin V_\eta[{\mathscr C}(w(s))]$ for $w\in N_2$. We may assume that $N_1, N_2\in {\cal F}_s$, and that  $P(N_2) > 0$.
Let $Q^{\prime}$ be any measure in $K(P, s, V_\eta[{\mathscr C}])$, and let $Q^{\prime}_w$ be the corresponding Markov kernel. We have
\[
\begin{aligned}
& \eta(Q^{\prime}) = \int \eta(Q^{\prime}_w)\;P(dw) =
\int_{N_1}\eta(Q^{\prime}_w)\;P(dw) + \int_{N_2}\eta(Q^{\prime}_w)\;P(dw) = \\
& \int_{N_1}\eta(Q_w)\;P(dw) + \int_{N_2}\eta(Q^{\prime}_w)\;P(dw) > \\
& \int_{N_1}\eta(Q_w)\;P(dw) + \int_{N_2}\eta(Q_w)\;P(dw) = \eta(Q)\,,
\end{aligned}
\]
and this contradicts \eqref{six}.
\end{proof}


\noindent$\bullet$\ {\bf Set-valued maps $\Gamma[P, x, s]({\mathscr C})$.}

Recall that $\Phi$ is a countable set of bounded continuous functions on $X$ that strongly separates points of $X$ and is closed under multiplication.
Choose a countable dense subset $\Lambda$ of $(0, +\infty)$, and let $(\lambda_n, \vp_n)$, $n = 1, 2, \dots$, be some enumeration of the Cartesian product $\Lambda\times \Phi$.
Associate with each  $(\lambda_n, \vp_n)$ the following linear continuous functional on ${\mathscr P}$:
\[
\zeta_n(P) = \int_0^\infty e^{-\lambda_n t}\int \vp_n(w(0))\,\theta_t P(dw)\,dt\,.
\]
In other words,
\be\label{zeta}
\zeta_n(P) = \int_0^\infty e^{-\lambda_n t}\int \vp_n(w(t))\, P(dw)\,dt\,.
\ee
For $s > 0$, denote
\[
\zeta_n^s(P) = \int_0^s e^{-\lambda_n t} \int \vp_n(w(t))\, P(dw)\,dt
\]
and notice that
\be
\zeta_n(P) = \zeta_n^s(P) + e^{-\lambda_n s}\,\zeta_n(\theta_s P)\,.
\ee
For $x\in X$, $P\in {\mathscr C}(x)$, and $s > 0$, define the following set
\be\label{eq:G}
\Gamma[P, x, s]({\mathscr C}) = \{Q\in {\mathscr C}(x): \theta_s Q\in K(P, s, {\mathscr C})\;\text{and}\; \zeta_n^s(Q) \ge \zeta_n^s(P), \forall n > 0\}\,.
\ee

\bigskip


\noindent$\bullet$\ {\bf Krylov's set-valued maps and abstract Markov selection theorem.}
\medskip

The definition that follows is morally similar to what Krylov in \cite{Krylov} calls Markov family of probability measures.
\begin{definition}\label{def:Krylov}
We call the set-valued map ${\mathscr C} : X \to 2^{\mathscr P}$ a Krylov map if ${\mathscr C}$
satisfies the conditions {\bf C1 - C3} of theorem \ref{thm:Markov} and, in addition,
has the following properties.
\begin{description}
\item[KP1] For every $x\in X$, and every $s > 0$, if $P\in {\mathscr C}(x)$, then $\theta_s P\in K(P, s, {\mathscr C})$.
\item[KP2] For every $x\in X$, and every $s > 0$, if $P\in {\mathscr C}(x)$, then
\[
\theta_s\left(\Gamma[P, x, s]({\mathscr C})\right) = K(P, s, {\mathscr C})\,.
\]
\end{description}
\end{definition}

\begin{definition}
A Markov selection of a Krylov set-valued map ${\mathscr C}$ is a measurable selection $x\to P_x\in {\mathscr C}(x)$ such that for any $s > 0$
\be\label{seven}
\theta_s P_x(\cdot) = \int P_{w(s)}(\cdot)\,P_x(dw)\,.
\ee
\end{definition}

Note that \eqref{seven} can be written in terms of conditional probabilities in a more traditional form
\be\label{eight}
P_x\left[\theta_s^{-1}(\cdot) |{\cal F}_s\right](w) = P_{w(s)}(\cdot),\quad (P_x,{\cal F}_s)-\text{a.s.}
\ee

\noindent Here is the abstract Markov selection theorem.
\begin{theorem}\label{Krylov}
Every Krylov set-valued map has a Markov selection.
\end{theorem}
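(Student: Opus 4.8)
The plan is to mimic the three-step scheme of Theorem~\ref{thm:main}, replacing the compact sets $S(x)\subset\Omega$ by the compact convex sets ${\mathscr C}(x)\subset{\mathscr P}$ and the scalar functionals $\zeta_n$ on $\Omega$ by the linear continuous functionals $\zeta_n$ on ${\mathscr P}$ from \eqref{zeta}. First I would record the key reduction lemma: if ${\mathscr C}$ is a Krylov map and $\eta$ is any linear continuous functional on ${\mathscr P}$, then $V_\eta[{\mathscr C}]$ is again a Krylov map. Properties {\bf C1} and {\bf C3} are immediate ($V_\eta$ cuts out a closed convex face of each compact convex ${\mathscr C}(x)$, and narrowing the set preserves $P(\Omega_x)=1$), and measurability {\bf C2} is the measurable maximum Theorem~\ref{thm:meas-max} applied on ${\mathscr P}$ (with the Carath\'eodory function $(x,P)\mapsto\eta(P)$, which is even continuous). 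The substance is {\bf KP1} and {\bf KP2} for $V_\eta[{\mathscr C}]$, and this is exactly where Lemma~\ref{commute} does the work: it gives $V_\eta[K(P,s,{\mathscr C})]=K(P,s,V_\eta[{\mathscr C}])$, so that the objects $K(\cdot,\cdot,\cdot)$ and $\Gamma[\cdot,\cdot,\cdot](\cdot)$ transform predictably under $V_\eta$. I expect this verification to be the main obstacle, since one must chase $\Gamma[P,x,s]$ through the maximization; concretely, for {\bf KP2} one shows $\Gamma[P,x,s](V_\eta[{\mathscr C}]) = \Gamma[P,x,s]({\mathscr C})\cap V_\eta[{\mathscr C}(x)]$ using that $\eta$-maximizers of ${\mathscr C}(x)$ whose shift lies in $K(P,s,{\mathscr C})$ correspond, via Strassen, to Markov kernels taking values in $V_\eta[{\mathscr C}(w(s))]$, and then $\theta_s$ of this set equals $V_\eta[K(P,s,{\mathscr C})] = K(P,s,V_\eta[{\mathscr C}])$ by Lemma~\ref{commute}; {\bf KP1} is similar but easier. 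One must also keep track of the auxiliary inequalities $\zeta_n^s(Q)\ge\zeta_n^s(P)$ in the definition of $\Gamma$ — since these functionals only depend on the path up to time $s$ and $V_\eta$-maximizers agree with $P$ on ${\cal F}_s$ when $\eta=\zeta_m$ for the relevant indices, these constraints are stable under the reductions.

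Next I would run the iteration. Set ${\mathscr C}^0={\mathscr C}$ and ${\mathscr C}^{n+1}=V_{\zeta_{n+1}}[{\mathscr C}^n]$, where $\{\zeta_n\}$ enumerates the functionals attached to $\Lambda\times\Phi$. By the reduction lemma each ${\mathscr C}^n$ is a Krylov map, and for each $x$ the sets ${\mathscr C}^n(x)$ form a decreasing sequence of non-empty compact convex sets; put ${\mathscr C}^\infty(x)=\bigcap_n{\mathscr C}^n(x)$, non-empty and compact convex, and ${\mathscr C}^\infty$ is measurable as a decreasing intersection of measurable set-valued maps (cf. \cite[Lemma 18.4]{A-B}), so it is again a Krylov map (the properties {\bf KP1}, {\bf KP2} pass to the intersection by the same compactness/finite-intersection argument as for $S^\infty$ in Theorem~\ref{thm:main}, using that the $K$-sets and $\Gamma$-sets are closed). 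The crucial point — the analogue of Step~2 — is that ${\mathscr C}^\infty(x)$ is a singleton. Indeed, if $P,P'\in{\mathscr C}^\infty(x)$ then for every $\vp\in\Phi$ and every $\lambda\in\Lambda$ one has $\int_0^\infty e^{-\lambda t}\big(\int\vp(w(t))\,P(dw)\big)\,dt = \int_0^\infty e^{-\lambda t}\big(\int\vp(w(t))\,P'(dw)\big)\,dt$ because $P$ and $P'$ both maximize $\zeta_n$ over ${\mathscr C}^{n-1}(x)\supset{\mathscr C}^\infty(x)$ and a maximum of a linear functional over a convex set is attained on a face; more carefully, from $P,P'\in V_{\zeta_n}[{\mathscr C}^{n-1}(x)]$ we get $\zeta_n(P)=\zeta_n(P')$, i.e., equality of the Laplace transforms in $\lambda$ along the dense set $\Lambda$, hence for all $\lambda>0$, hence $\int\vp(w(t))\,P(dw)=\int\vp(w(t))\,P'(dw)$ for all $t\ge0$ by injectivity of the Laplace transform. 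Since $\Phi$ strongly separates points and is closed under multiplication, the measures $\pi_t P$ and $\pi_t P'$ on $X$ agree for every $t$ (a finite collection of such $\vp$'s separates points well enough to force equality of Borel measures); finitely many applications of the same argument to products $\vp_{i_1}\cdots\vp_{i_k}\in\Phi$ at times $t_1,\dots,t_k$ then force the finite-dimensional distributions of $P$ and $P'$ to coincide, and since ${\cal F}=\sigma(\cup_t{\cal F}_t)$ this gives $P=P'$.

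Finally, Step~3: let $x\mapsto P_x$ be the unique selection of ${\mathscr C}^\infty$; it is measurable by Kuratowski--Ryll-Nardzewski (the only selection of a measurable singleton-valued map). It remains to verify the Markov identity \eqref{seven}, equivalently \eqref{eight}. Fix $x$ and $s>0$. By {\bf KP1} for ${\mathscr C}^\infty$, $\theta_s P_x\in K(P_x,s,{\mathscr C}^\infty)$, and by {\bf KP2} for ${\mathscr C}^\infty$ we have $\theta_s(\Gamma[P_x,x,s]({\mathscr C}^\infty))=K(P_x,s,{\mathscr C}^\infty)$; since $P_x$ is the unique element of ${\mathscr C}^\infty(x)$, the set $\Gamma[P_x,x,s]({\mathscr C}^\infty)$ is either empty or $\{P_x\}$, and {\bf KP1} together with $\zeta_n^s(P_x)\ge\zeta_n^s(P_x)$ shows $P_x\in\Gamma[P_x,x,s]({\mathscr C}^\infty)$, so it is exactly $\{P_x\}$ and therefore $K(P_x,s,{\mathscr C}^\infty)=\{\theta_s P_x\}$. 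Now apply Strassen's theorem to $Q=\theta_s P_x\in K(P_x,s,{\mathscr C}^\infty)$: there is a Markov kernel $w\mapsto Q_w$ with $Q_w\in{\mathscr C}^\infty(w(s))$ for $P_x$-a.a.\ $w$ and $\theta_s P_x = Q_w\cdot P_x$; but ${\mathscr C}^\infty(w(s))=\{P_{w(s)}\}$, so $Q_w=P_{w(s)}$ $P_x$-a.s., and unwinding the definition of $Q_w\cdot P_x$ through Lemma~\ref{PxQ} gives $\theta_s P_x(\cdot)=\int P_{w(s)}(\cdot)\,P_x(dw)$, which is \eqref{seven}. This completes the proof.
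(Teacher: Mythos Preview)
Your overall architecture matches the paper's, but the singleton argument in your Step~2 has a genuine gap. From $\zeta_n(P)=\zeta_n(P')$ for all $n$ and injectivity of the Laplace transform you correctly obtain $\int\vp(w(t))\,P(dw)=\int\vp(w(t))\,P'(dw)$ for every $\vp\in\Phi$ and every $t\ge0$, and hence (via \cite{B-K}) that the one-dimensional marginals $P\circ\pi_t^{-1}$ and $P'\circ\pi_t^{-1}$ coincide. But that is all the $\zeta_n$'s give you: each $\zeta_n$ integrates $\vp(w(t))$ at a \emph{single} time. Your sentence ``finitely many applications of the same argument to products $\vp_{i_1}\cdots\vp_{i_k}\in\Phi$ at times $t_1,\dots,t_k$'' does not work --- the closure of $\Phi$ under multiplication only produces new functions of a single variable, and no $\zeta_n$ tests a functional of the form $\vp_1(w(t_1))\vp_2(w(t_2))$ with $t_1\neq t_2$. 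Equality of one-dimensional marginals does not, by itself, force equality of the measures on $\Omega$.

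The paper closes this gap by \emph{using} the Krylov structure of ${\mathscr C}^\infty$ (which you do assert but do not exploit here). From {\bf KP1} for ${\mathscr C}^\infty$ and Strassen's theorem one gets, for any $P\in{\mathscr C}^\infty(x)$, a disintegration $\theta_s P=\int Q_w\,P(dw)$ with $Q_w\in{\mathscr C}^\infty(w(s))$; since the one-time marginals $p_t(y,\cdot)$ are already shown to be independent of the choice of measure in ${\mathscr C}^\infty(y)$, this yields the Chapman--Kolmogorov relation $p_{t+s}(x,\cdot)=\int p_t(z,\cdot)\,p_s(x,dz)$, and then an iterated computation expresses every finite-dimensional distribution of $P$ as a product of the transition kernels $p_{t_j-t_{j-1}}$. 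That is what forces $P=P'$. Your Step~3 is fine once the singleton property is established, but you cannot reach it the way you propose. (A smaller point: your reduction lemma is stated for an arbitrary linear $\eta$, but the verification of {\bf KP1}--{\bf KP2} genuinely needs the splitting $\zeta=\zeta^s+e^{-\lambda s}\zeta\circ\theta_s$, so it holds only for the specific $\zeta_n$; this does not affect the iteration since you only use those.)
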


\subsection{Proof of the abstract theorem}\label{sec:abstrMark}

\begin{lemma}\label{reduct}
Let ${\mathscr C}$ be a Krylov map. Then, for any functional $\zeta_n$, see \eqref{zeta}, the map $V_{\zeta_n}[{\mathscr C}]$ is Krylov as well.
\end{lemma}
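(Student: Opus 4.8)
The plan is to verify that $V_{\zeta_n}[\mathscr C]$ inherits each of the defining properties of a Krylov map. The conditions \textbf{C1--C3} are essentially immediate: $V_{\zeta_n}[\mathscr C(x)]$ is a non-empty (since $\zeta_n$ is continuous on the compact set $\mathscr C(x)$) compact convex subset of the compact convex set $\mathscr C(x)$, so \textbf{C1} holds; \textbf{C3} is inherited because $V_{\zeta_n}[\mathscr C(x)]\subset\mathscr C(x)$ and every measure there is already supported on $\Omega_x$; and measurability (\textbf{C2}) follows from the measurable maximum theorem (Theorem~\ref{thm:meas-max}) applied to the Carath\'eodory function $(x,P)\mapsto\zeta_n(P)$ — here one uses that $\zeta_n$ is a fixed continuous linear functional, so the dependence on $x$ enters only through the measurable set-valued map $\mathscr C$, and weak-measurability is equivalent to the \textbf{C2}-measurability in this Polish/compact-valued setting.

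The substance is in \textbf{KP1} and \textbf{KP2}. For \textbf{KP1}, take $P\in V_{\zeta_n}[\mathscr C(x)]$; since $P\in\mathscr C(x)$ and $\mathscr C$ satisfies \textbf{KP1}, we already know $\theta_s P\in K(P,s,\mathscr C)$. What must be upgraded is that $\theta_s P\in K(P,s,V_{\zeta_n}[\mathscr C])$. By Lemma~\ref{commute}, $K(P,s,V_{\zeta_n}[\mathscr C])=V_{\zeta_n}[K(P,s,\mathscr C)]$, so it suffices to show $\theta_s P$ is a $\zeta_n$-maximizer within $K(P,s,\mathscr C)$. This is where the splitting identity $\zeta_n(Q)=\zeta_n^s(Q)+e^{-\lambda_n s}\zeta_n(\theta_s Q)$ does the work: first, because $P$ maximizes $\zeta_n$ over $\mathscr C(x)$ and $\mathscr C$ satisfies \textbf{KP2}, every $Q'\in\mathscr C(x)$ that competes with $P$ must satisfy $\zeta_n^s(Q')\le\zeta_n^s(P)$ or rather one compares via the set $\Gamma[P,x,s](\mathscr C)$; the key point is that any competitor $R\in K(P,s,\mathscr C)$ can be realized as $R=\theta_s Q$ for some $Q\in\Gamma[P,x,s](\mathscr C)$ by \textbf{KP2}, and then $\zeta_n^s(Q)\ge\zeta_n^s(P)$ by the definition of $\Gamma$ together with $\zeta_n(Q)\le\zeta_n(P)$ forces $\zeta_n(\theta_s Q)\le\zeta_n(\theta_s P)$. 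Running this inequality over all $R\in K(P,s,\mathscr C)$ gives that $\theta_s P\in V_{\zeta_n}[K(P,s,\mathscr C)]$, which is \textbf{KP1} for $V_{\zeta_n}[\mathscr C]$.

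For \textbf{KP2}, we must show $\theta_s\bigl(\Gamma[P,x,s](V_{\zeta_n}[\mathscr C])\bigr)=K(P,s,V_{\zeta_n}[\mathscr C])$ for $P\in V_{\zeta_n}[\mathscr C(x)]$. The inclusion $\subseteq$ is built into the definition of $\Gamma$. For $\supseteq$, take $R\in K(P,s,V_{\zeta_n}[\mathscr C])=V_{\zeta_n}[K(P,s,\mathscr C)]$ (Lemma~\ref{commute} again); since $R\in K(P,s,\mathscr C)$ and $\mathscr C$ has \textbf{KP2}, there is $Q\in\Gamma[P,x,s](\mathscr C)$ with $\theta_s Q=R$, so $Q\in\mathscr C(x)$, $\zeta_m^s(Q)\ge\zeta_m^s(P)$ for all $m$, and $\theta_s Q=R\in K(P,s,\mathscr C)$. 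It remains to promote $Q$ to a member of $\Gamma[P,x,s](V_{\zeta_n}[\mathscr C])$: we need $Q\in V_{\zeta_n}[\mathscr C(x)]$ and $\theta_s Q\in K(P,s,V_{\zeta_n}[\mathscr C])$. The second holds since $\theta_s Q=R$ was chosen in $K(P,s,V_{\zeta_n}[\mathscr C])$. For the first, use the splitting: $\zeta_n(Q)=\zeta_n^s(Q)+e^{-\lambda_n s}\zeta_n(R)\ge\zeta_n^s(P)+e^{-\lambda_n s}\zeta_n(\theta_s P)=\zeta_n(P)$, where the inequality uses $\zeta_n^s(Q)\ge\zeta_n^s(P)$ and the fact (from the \textbf{KP1} step just proved, or directly from $R$ being a $\zeta_n$-maximizer in $K(P,s,\mathscr C)$) that $\zeta_n(R)\ge\zeta_n(\theta_s P)$; since $P$ is already a $\zeta_n$-maximizer in $\mathscr C(x)$, equality holds throughout and $Q\in V_{\zeta_n}[\mathscr C(x)]$, and moreover $\zeta_m^s(Q)\ge\zeta_m^s(P)$ for all $m$ carries over. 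Hence $Q\in\Gamma[P,x,s](V_{\zeta_n}[\mathscr C])$ and $\theta_s Q=R$, giving the reverse inclusion.

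I expect the main obstacle to be bookkeeping around \textbf{KP2}: one has to be careful that the measure $Q$ produced by \textbf{KP2} for $\mathscr C$ simultaneously maximizes $\zeta_n$ over $\mathscr C(x)$ \emph{and} keeps $\theta_s Q$ inside the smaller set $K(P,s,V_{\zeta_n}[\mathscr C])$, and the only lever available is the additive splitting of $\zeta_n$ combined with the fact that $P$ itself was already a maximizer — so the inequalities must be shown to collapse to equalities. A secondary subtlety is ensuring Lemma~\ref{commute} is applicable at each invocation (it requires only that $\zeta_n$ be a linear continuous functional, which it is), and that the $\mathcal F_s$-measurability of the relevant Markov kernels is preserved under passing to $V_{\zeta_n}$, which is exactly the content of Lemma~\ref{commute}'s proof.
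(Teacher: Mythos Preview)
Your proposal is correct and follows essentially the same route as the paper's proof: \textbf{C1}--\textbf{C3} are dispatched via the measurable maximum theorem, \textbf{KP1} is obtained by lifting any $R\in K(P,s,\mathscr C)$ to $Q\in\Gamma[P,x,s](\mathscr C)$ via \textbf{KP2} for $\mathscr C$ and then using the splitting $\zeta_n=\zeta_n^s+e^{-\lambda_n s}\zeta_n\circ\theta_s$ together with $\zeta_n(P)\ge\zeta_n(Q)$ and $\zeta_n^s(Q)\ge\zeta_n^s(P)$, and \textbf{KP2} is handled by the same splitting to promote $Q$ to a $\zeta_n$-maximizer in $\mathscr C(x)$. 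The only cosmetic difference is that the paper records the equality $\zeta_n(\theta_sP)=\zeta_n(R)$ explicitly (both lie in $V_{\zeta_n}[K(P,s,\mathscr C)]$) and computes $\zeta_n(Q)-\zeta_n(P)=\zeta_n^s(Q)-\zeta_n^s(P)$ directly, whereas you run it as an inequality chain; either way the conclusion $Q\in V_{\zeta_n}[\mathscr C(x)]$ follows.
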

\begin{proof}
Let $\zeta$ be one of the functionals $\zeta_n$, i.e., for $P\in{\mathscr P}$,
\[
\zeta(P) = \int_0^\infty e^{-\lambda t}\int \vp(w(0))\,\theta_t P(dw)\,dt\,,
\]
where $(\lambda, \vp) = (\lambda_n, \vp_n)$ for some $n$. As before, we denote by $\zeta^s(P)$ the
truncated integral
\[
\zeta^s(P) = \int_0^s e^{-\lambda t}\int \vp(w(0))\,\theta_t P(dw)\,dt\,.
\]
Let ${\mathscr C}$ be a Krylov map and consider
the map $V_\zeta[{\mathscr C}]: x\mapsto V_\zeta[{\mathscr C}(x)]$. To verify that this map is Krylov, we check properties {\bf C1 - C3} and {\bf KP1 - KP2} of Definition \ref{def:Krylov}. It is clear that conditions {\bf C1} and {\bf C3} are satisfied. The map $V_\zeta[{\mathscr C}]$ is measurable
by \cite[Theorem 18.19]{A-B}, hence condition {\bf C2} is verified. To check {\bf KP1}, pick any $P\in V_\zeta[{\mathscr C}(x)]$ and an $s \ge 0$. We want to show that $\theta_s P\in K(P, s, V_\zeta[{\mathscr C}])$. In view of Lemma \ref{commute}, we need $\zeta(\theta_s P)\ge \zeta(Q)$ for all $Q\in K(P, s, {\mathscr C})$. Condition {\bf KP2} for ${\mathscr C}$ implies that if  $Q\in K(P, s, {\mathscr C})$, then there exists $Q^\prime \in \Gamma[P, x, s]({\mathscr C})$ such that $\theta_s Q^\prime = Q$.  Since $Q^\prime\in {\mathscr C}(x)$ and $P$ maximizes $\zeta$ on ${\mathscr C}(x)$, we have
\[
\zeta(P) \geq \zeta(Q')\,,
\]
which we re-write as
\[
\zeta^s(P) + e^{-\lambda s} \zeta(\theta_s P) \geq \zeta^s(Q') + e^{-\lambda s} \zeta(Q)\,.
\]
On the other hand, since $Q^\prime \in \Gamma[P, x, s]({\mathscr C})$,
\[
\zeta^s(P) \leq \zeta^s(Q')\,.
\]
Combine this with the previous inequality to obtain
\[
\zeta(\theta_s P) \geq \zeta(Q)\,,
\]
which implies $\theta_s P \in V_\zeta[K(P, s,{\mathscr C})]$, and property {\bf KP1} follows.

It remains to show that
\be\label{eq:KP2}
\theta_s\left(\Gamma[P, x, s](V_\zeta[{\mathscr C}])\right) = K(P, s, V_\zeta[{\mathscr C}])
\ee
provided $P\in V_\zeta[{\mathscr C}(x)]$. Take any $Q\in K(P, s, V_\zeta[{\mathscr C}])$. Then
$Q\in K(P, s, {\mathscr C})$, and, by condition {\bf KP2} for ${\mathscr C}$, there exists
$Q^\prime\in \Gamma[P, x, s]({\mathscr C})$ such that $\theta_s Q^\prime = Q$. Since both $\theta_s P$ and $Q$ belong to $V_\zeta[K(P, s,{\mathscr C})]$, we have $\zeta(\theta_s P) = \zeta(Q) = \zeta(\theta_s Q^\prime)$. Then
\[
\zeta(Q^\prime) - \zeta(P) = \zeta^s(Q^\prime) + e^{-\lambda s} \zeta(\theta_s Q^\prime)
- \zeta^s(P) - e^{-\lambda s} \zeta(\theta_s P)
         = \zeta^s(Q^\prime) - \zeta^s(P)\,.
\]
Now recall that $Q^\prime\in \Gamma[P, x, s]({\mathscr C})$ which implies $\zeta^s(Q^\prime)\ge  \zeta^s(P)$. Thus, $\zeta(Q^\prime) \ge \zeta(P)$. Since $P$ maximizes $\zeta$ on $\mathscr C$, so does
$Q^\prime$. Then $Q^\prime\in \Gamma[P, x, s](V_\zeta[{\mathscr C}])$.
Inclusion $\subset$ in \eqref{eq:KP2} is obvious. Indeed,  let $P\in V_\zeta[{\mathscr C}(x)]$ and suppose $Q^\prime\in \Gamma[P, x, s](V_\zeta[{\mathscr C}])$. Then $\theta_s Q^\prime\in K(P, s, V_\zeta[{\mathscr C}])$ right from the definition of $\Gamma[P, x, s](V_\zeta[{\mathscr C}])$.
\end{proof}

Define now a sequence of Krylov maps recursively by setting ${\mathscr C}^0 =  {\mathscr C}$ and
${\mathscr C}^{n+1} =  V_{\zeta_{n+1}}[{\mathscr C}^n]$. For every $x\in X$, the sets ${\mathscr C}^n(x)$ form a decreasing sequences of nested convex compacta, hence their intersection,
${\mathscr C}^\infty(x)$, is a non-empty convex compact. Thus we obtain a reducted set-valued map
${\mathscr C}^\infty : x\mapsto {\mathscr C}^\infty(x)$. This map is again Krylov. We prove next that
each set ${\mathscr C}^\infty(x)$ is a singleton.

Suppose $P_1, P_2\in {\mathscr C}^\infty(x)$. Then $\zeta_n(P_1) = \zeta_n(P_2)$ for all $n$. The way
the functionals $\zeta_n$ were defined, we first conclude (by taking into account  uniqueness of the Laplace transform) that equality
\be\label{unique}
\int \vp(w(t))\,P_1(dw) = \int \vp(w(t))\,P_2(dw)\,,\quad \forall t \ge 0,
\ee
is satisfied for all functions $\vp\in\Phi$.  Since functions in $\Phi$ strongly separate points in $X$ and $\Phi$ is closed under multiplication, by Theorem~11 of \cite{B-K} the family $\Phi$ separates Borel measures on $X$.  As a consequence,
the probability measures
\be\label{trans-prob}
p_s(x, \cdot) = P(\pi_s^{-1}(\cdot))
\ee
on $X$ are independent of the choice of probability measure $P\in {\mathscr C}^\infty(x)$. Take one such $P$.
An integral form of equation \eqref{trans-prob} is
\be\label{int-tp}
\int_X \vp(y)\,p_s(x, dy) = \int_\Omega \vp(w(s))\,P(dw)\,,\quad \forall \vp\in C_b(X)\,.
\ee
By property {\bf KP1}, $\theta_s P \in K(P, s, {\mathscr C}^\infty)$. Then Strassen's theorem guarantees that there exists a Markov kernel $Q_w$ such that
\[
\int g(w)\,\theta_s P(dw) = \int \int g(v)\,Q_w(dv)\;P(dw)
\]
for any $g\in C_b(\Omega)$, and $Q_w\in {\mathscr C}^\infty(w(s))$\ \ $P$-a.s., and the map
$w\mapsto  \int g(v)\,Q_w(dv)$ is ${\cal F}_s$-measurable. Take $g(w) = \vp(\pi_t(w))$,
where $\vp\in C_b(X)$ and $t\ge 0$.
Then the above equality reads
\[
\int \vp(w(t+s))\,P(dw) = \int \int \vp(\pi_t(v))\,Q_w(dv)\;P(dw)\,.
\]
Using \eqref{trans-prob} and \eqref{int-tp}, we can re-write it as follows:
\[
\int_X \vp(y)\,p_{t+s}(x, dy) = \int \int_X \vp(y)\,p_t(w(s), dy)\;P(dw) =
\int_X \int_X \vp(y)\,p_t(z, dy)\,p_s(x, dz)\,.
\]
In other words, the measures $p_t(x, \cdot)$ satisfy the Chapman-Kolmogorov equation,
\be\label{CK}
p_{t+s}(x, dy) = \int_X p_t(z, dy)\,p_s(x, dz)\,.
\ee
The fact that $\theta_s P \in K(P, s, {\mathscr C}^\infty(x))$ for all $s$ and equation \eqref{int-tp} justify the following calculation, where $\psi_1$ and $\psi_2$ are arbitrary functions from $C_b(X)$:
\[
\begin{aligned}
& \int \psi_1(w(s)) \psi_2(w(s + t))\,P(dw) = \int \psi_1(w(0)) \psi_2(w(t))\,(\theta_s P)(dw) = \\
& \int \left(\int \psi_1(v(0)) \psi_2(v(t))\,Q_w(dv)\right)\;P(dw) =
\int \psi_1(w(s))\,\left(\int \psi_2(v(t))\,Q_w(dv)\right)\;P(dw) = \\
& \int \psi_1(w(s))\, \left(\int_X \psi_2(x_2)\,p_t(w(s), dx_2)\right)\;P(dw) = \\
&\int_X \psi_1(x_1)\, \left(\int_X \psi_2(x_2)\,p_t(x_1, dx_2)\right)\,p_s(x, dx_1)\,.
\end{aligned}
\]
More generally, we obtain
\[
\begin{aligned}
& \int_\Omega \psi_1(w(t_1)) \psi_2(w(t_2))\cdots \psi_n(w(t_n))\,P(dw) = \\
& \int_{X^n} \psi_1(x_1) \psi_2(x_2)\cdots \psi_n(x_n)\,p_{t_1}(x, dx_1)\,p_{t_2-t_1}(x_1, dx_2) \cdots p_{t_n-t_{n-1}}(x_{n-1}, dx_n)\,.
\end{aligned}
\]
This shows that the values of the measure $P$ on the sets \eqref{basic-sets}  are independent of the choice of $P$ from ${\mathscr C}^\infty(x)$. Hence, ${\mathscr C}^\infty(x)$ is a singleton.
The family of measures $P_x\in {\mathscr C}^\infty(x)$, $x\in X$, satisfies \eqref{seven}, hence it is Markov.
This completes the proof of Theorem \ref{Krylov}.


\subsection{Proof of the Markov selection theorem}\label{proof:Markov}

We are going to show that the Markov selection Theorem~\ref{thm:Markov} follows immediately from
our abstract Theorem \ref{Krylov}. To this end we check that conditions  {\bf MP1 - MP2} in the statement of theorem \ref{thm:Markov} guarantee that conditions {\bf KP1 - KP2}
for the map ${\mathscr C}$ to be Krylov are satisfied. To verify {\bf KP1}, we use assumption {\bf MP1}.
 Pick $x \in X$ and a $P \in \mathscr{C}(x)$.  From the definition of conditional probability,
 $$P(A \cap \theta_s^{-1}(B)) = \int_A P[\theta_s^{-1}(B) | \mathcal{F}_s](w) P(dw)\,$$
 for $A \in \mathcal{F}_s$ and $B \in \mathcal{F}$. If $A = \Omega$,
 \[
 \theta_s P(B) = \int P[\theta_s^{-1}(B) | \mathcal{F}_s](w) P(dw)\,.
 \]
Assumption \textbf{MP1} and Strassen's theorem then show that $\theta_s P \in K(P, s, \mathscr{C})$, hence property \textbf{KP1} is satisfied.

  Notice here that we need in the construction of $K(P, s, \mathscr{C})$ to use $P$ restricted to $\mathcal{F}_s$ and not just to $\sigma(\pi_s)$, because $w \mapsto P[\theta_s^{-1}(B) | \mathcal{F}_s](w)$ is $\mathcal{F}_s$-measurable but not $\sigma(\pi_s)$-measurable.

  Finally, with $P \in \mathscr{C}(x)$, any element in $K(P, s,\mathscr{C})$ comes from some measurable selection $Q_{\cdot}$ of the set-valued map $ w\mapsto \mathscr{C}(w(s))$.
  According to \textbf{MP2}, $P \otimes_s Q \in \mathscr{C}(x)$, and
  $$\theta_s(P \otimes_s Q)(\cdot) = \int Q_w(\cdot) P(dw)\,.$$
 Therefore, $\theta_s (P \otimes_s Q) \in K(P, s, \mathscr{C})$.
Since $P(A) = (P\otimes_s\!Q)(A)$ for any $A \in \mathcal{F}_s$, we have $\zeta^s_n[P] = \zeta^s_n[P \otimes_s Q]$. Hence, $P \otimes_s Q \in \Gamma[P, x, s]({\mathscr C})$.
 Thus, $K(P, s, \mathscr{C}) =  \theta_s (\Gamma[P, x, s]({\mathscr C}))$ and property \textbf{KP2} is satisfied. End of proof.

\bigskip

\begin{remark}
As we have just seen, assumption {\bf MP2} implies the validity of {\bf KP2}. If we try to go in the opposite direction
and assume property {\bf KP2}, there seems to be no justification for $P\otimes_s Q$ to belong to the set $\mathscr{C}(x)$ unless some additional assumptions on $\mathscr{C}(x)$ are made.
\end{remark}

\subsection{Selections with strong Markov property}

The abstract Theorem \ref{thm:Markov} can be easily extended to yield selections with strong Markov
property. We use the old notation plus a few modifications needed to include stopping times in ${\cal T}$.

Recall, that a stopping time with respect to the filtration $(\mathcal{F}_s)_{s \geq 0}$ is a map
$\tau: \Omega \to [0, \infty)$ such that  $\{w \in \Omega : \tau(w) \leq s \} \in \mathcal{F}_s$ for any $s \geq 0$. If $\tau$ is a stopping time, then
\begin{enumerate}
\item[--\ ] $\pi_\tau$ is the map $\Omega\to X$ such that $\pi_{\tau}(w) = \pi_{\tau(w)}(w)$;
\item[--\ ] $\theta_\tau$ maps $w(\cdot)\in \Omega$ to $w(\tau(w) + \cdot)$;
\item[--\ ] ${\cal F}_\tau$ is the $\sigma$-algebra made of the sets $A\in {\cal F}$ such that
  $A\cap \{w \in \Omega :\tau(w) \leq s\} \in \mathcal{F}_s$, for all $ s \geq 0$;
\item[--\ ] the construction of $P\otimes_\tau Q$ is analogous to that in Lemma~\ref{PxQ}, see, e.g.,
  \cite[Theorem 6.1.2.]{S-V} or \cite[Theorem 1.53]{klenke2013probability};
\item[--\ ] if $\zeta$ is one of the $\zeta_n$ functionals corresponding to $(\lambda, \vp) = (\lambda_n, \vp_n)$, and if $\tau$ is a stopping time, by $\zeta^\tau$ we understand the functional
\[
 P \mapsto \zeta^\tau(P) = \int_\Omega \int_0^{\tau(w)} e^{-\lambda t} f(\theta_t w) dt P(dw)\,;
\]
similarly, we have
\[
\zeta(P) = \zeta^\tau(P) + e^{-\lambda \tau} \zeta(\theta_\tau P)\,,
\]
where, by definition,
\[
e^{-\lambda \tau} \zeta(\theta_\tau P) = \int_\Omega e^{-\lambda \tau(w)}\int_0^\infty e^{-\lambda t} f(\theta_{\tau(w) + t}  w) dt P(dw)\,.
\]
\end{enumerate}

With these modifications, the sets $K(P, \tau, \mathscr{C})$ and $\Gamma[P, x, \tau](\mathscr C)$ are defined by replacing $s$ with $\tau$ in \eqref{eq:K} and \eqref{eq:G}, i.e.,
\[
K(P, \tau, \mathscr{C}) =\left\{Q \in \mathscr{P} : \int_\Omega f(x) Q(dx) \leq \int_\Omega h_{\pi_{\tau}(w)}(f) P(dw), \forall f \in C_b(\Omega) \right\}\,.
\]
and
\[
\Gamma[P, x, \tau]({\mathscr C}) = \{Q\in {\mathscr C}(x): \theta_\tau Q\in K(P, \tau, {\mathscr C})\;\text{and}\; \zeta_n^\tau(Q) \ge \zeta_n^\tau(P), \forall n > 0\}\,.
\]
Now that all these formulas work with stopping times deterministic or not, we extend the
Definition~\ref{def:Krylov}
of Krylov maps by allowing $s$ to be any positive stopping time in the properties {\bf KP1} and {\bf KP2}. We call such
Krylov maps {\it strong}.
Similarly, we extend formulas \eqref{seven} and \eqref{eight} in the definition of a Markov selection to
allow $s$ to be any positive stopping time. This gives definition of a {\it strong} Markov selection.
By repeating the arguments in Section~\ref{sec:abstrMark} with general stopping time, we obtain the following theorem.
\begin{theorem}\label{thm:strong-Krylov}
Every strong Krylov set-valued map has a strong Markov selection.
\end{theorem}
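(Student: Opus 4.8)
The plan is to mirror, essentially verbatim, the proof architecture of Theorem~\ref{Krylov} (i.e. Lemma~\ref{reduct} $\to$ recursive reduction $\to$ singleton argument), replacing every occurrence of a deterministic time $s$ by a generic positive stopping time $\tau$, and checking that each ingredient survives this replacement. The skeleton of the argument is: (i) show that if $\mathscr C$ is a strong Krylov map, then so is $V_{\zeta_n}[\mathscr C]$ for each functional $\zeta_n$; (ii) build the decreasing sequence $\mathscr C^{n+1}=V_{\zeta_{n+1}}[\mathscr C^n]$ of strong Krylov maps, take the intersection $\mathscr C^\infty$, which is again strong Krylov; (iii) prove $\mathscr C^\infty(x)$ is a singleton; (iv) conclude that the resulting selection $x\mapsto P_x$ satisfies the strong Markov identity $\theta_\tau P_x(\cdot)=\int P_{w(\tau(w))}(\cdot)\,P_x(dw)$ for every positive stopping time $\tau$.

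\medskip

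\noindent First I would verify that the two technical pillars used in Section~\ref{sec:abstrMark} remain available with $\tau$ in place of $s$. The analogue of Lemma~\ref{PxQ} for stopping times is standard and is quoted above (Stroock--Varadhan \cite[Theorem 6.1.2]{S-V}, or \cite[Theorem 1.53]{klenke2013probability}): given an $\mathcal F_\tau$-measurable kernel $Q_\bullet$ with $Q_w$ supported on $\Omega_{w(\tau(w))}$, the measure $P\otimes_\tau Q$ exists, agrees with $P$ on $\mathcal F_\tau$, and has $P[\theta_\tau^{-1}(\cdot)\,|\,\mathcal F_\tau](w)=Q_w$ for $P\otimes_\tau Q$-a.a.\ $w$. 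Strassen's theorem is time-agnostic: applied on the measurable space $(\Omega,\mathcal F_\tau)$ with target $R=\mathscr P(\Omega)$ and the set-valued map $w\mapsto \mathscr C(w(\tau(w)))$ (which is $\mathcal F_\tau$-measurable because $\pi_\tau$ is $\mathcal F_\tau$-measurable and $\mathscr C$ is Borel-measurable), it gives the representation of every element of $K(P,\tau,\mathscr C)$ as $\theta_\tau$-pushforward-of-a-mixture $Q=Q_w\cdot P$ with $Q_w\in\mathscr C(w(\tau(w)))$ $P$-a.s. With these in hand, Lemma~\ref{commute} carries over word for word: its proof only manipulates the identity $\eta(Q)=\int\eta(Q_w)\,P(dw)$ and the decomposition of $\Omega$ into $N_0\cup N_1\cup N_2$, neither of which cares whether the time is deterministic. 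The proof of Lemma~\ref{reduct} then goes through using the splitting $\zeta(P)=\zeta^\tau(P)+e^{-\lambda\tau}\zeta(\theta_\tau P)$ displayed in the excerpt (which is the whole point of having introduced $\zeta^\tau$ and $e^{-\lambda\tau}\zeta(\theta_\tau\cdot)$ precisely as defined there): the chain of inequalities $\zeta(P)\ge\zeta(Q')$, $\zeta^\tau(P)\le\zeta^\tau(Q')$, hence $\zeta(\theta_\tau P)\ge\zeta(Q)$ for KP1, and the symmetric computation for KP2, are unchanged.

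\medskip

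\noindent For the singleton step (iii), suppose $P_1,P_2\in\mathscr C^\infty(x)$. Since $\zeta_n(P_1)=\zeta_n(P_2)$ for all $n$ and $\Lambda$ is dense in $(0,\infty)$, uniqueness of the Laplace transform gives $\int\varphi(w(t))\,P_1(dw)=\int\varphi(w(t))\,P_2(dw)$ for all $\varphi\in\Phi$ and all $t\ge0$; because $\Phi$ strongly separates points and is closed under multiplication, by \cite[Theorem 11]{B-K} it separates Borel measures on $X$, so the one-dimensional marginals $p_t(x,\cdot)=P(\pi_t^{-1}(\cdot))$ are independent of the choice of $P\in\mathscr C^\infty(x)$. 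Then exactly the calculation in Section~\ref{sec:abstrMark} --- using KP1 of $\mathscr C^\infty$ with \emph{deterministic} times $s$, Strassen, and the Chapman--Kolmogorov relation it produces --- upgrades this to: all finite-dimensional distributions of $P$ on the generating sets \eqref{basic-sets} are determined by $x$. Since $\mathcal F=\mathcal F_\infty$ is generated by these sets, $P_1=P_2$, so $\mathscr C^\infty(x)=\{P_x\}$. Note this part needs \emph{nothing new} beyond Theorem~\ref{Krylov}: the singleton property of $\mathscr C^\infty$ already follows from the deterministic-time KP1/KP2 that a strong Krylov map in particular satisfies. Finally, since $\mathscr C^\infty$ is strong Krylov and single-valued, KP1 and KP2 with a general positive stopping time $\tau$ force $\theta_\tau P_x=\int P_{w(\tau(w))}\,P_x(dw)$, which is \eqref{seven}--\eqref{eight} for stopping times, i.e.\ the strong Markov property; measurability of $x\mapsto P_x$ is automatic since it is the only selection of the measurable map $\mathscr C^\infty$ (Kuratowski--Ryll-Nardzewski).

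\medskip

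\noindent \textbf{The main obstacle} I expect is purely measure-theoretic bookkeeping rather than any genuinely new idea: one must be careful that $K(P,\tau,\mathscr C)$ is built from $P$ restricted to $\mathcal F_\tau$ (not $\sigma(\pi_\tau)$), that $w\mapsto h_{\pi_\tau(w)}[f]$ is $\mathcal F_\tau$-measurable, that the kernel $Q_w$ produced by Strassen can be chosen $\mathcal F_\tau$-measurable, and that $\Gamma[P,x,\tau](\mathscr C)$ is nonempty (which is where KP2, via $\theta_\tau\Gamma=K$, is used and must be threaded through the reduction). The one spot deserving an explicit sentence is why the recursion converges to a singleton even though the reductions $V_{\zeta_n}$ use only deterministic-time functionals $\zeta_n$: the answer is that the stopping-time structure is only \emph{preserved} by the reductions (Lemma~\ref{reduct} with $\tau$), while the \emph{singleton} conclusion is driven, as in Theorem~\ref{Krylov}, by the countable family $\{\zeta_n\}$ plus the deterministic-time KP1 — so no strengthening of the functional family is required. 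Everything else is a transcription of Section~\ref{sec:abstrMark} and Section~\ref{proof:Markov} with $s\leadsto\tau$.
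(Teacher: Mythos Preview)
Your proposal is correct and follows exactly the route the paper takes: the paper's entire proof is the single sentence ``By repeating the arguments in Section~\ref{sec:abstrMark} with general stopping time, we obtain the following theorem,'' and your write-up is precisely that repetition, carried out with more explicit bookkeeping. Your observation that the singleton step (iii) needs only the deterministic-time KP1 already available from the ordinary Krylov structure, while the stopping-time KP1/KP2 are only required to be \emph{preserved} through the reductions and then invoked once on the singleton map, is a helpful clarification that the paper leaves implicit.
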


As a corollary, we have
\begin{theorem}\label{thm:strong-Markov}
Assume that the set-valued map ${\mathscr C}$ satisfies the conditions {\bf C1}-{\bf C3} of Theorem \ref{thm:Markov}.
In addition, assume that the conditions {\bf MP1} and {\bf MP2} are satisfied for any non-negative stopping time $s$.
Then there exists a measurable selection $P_x\in{\mathscr C}(x)$, $x\in X$, with the strong Markov property:
\[
P_x[\theta_\tau^{-1}(\cdot) | \mathcal{F}_\tau](w) = P_{\pi_\tau(w)} \quad (P_x, {\cal F}_\tau)\text{-a.s.}\,,
\]
for all stopping times $\tau$.
\end{theorem}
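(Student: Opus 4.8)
The plan is to derive Theorem~\ref{thm:strong-Markov} from Theorem~\ref{thm:strong-Krylov} in exactly the same way Theorem~\ref{thm:Markov} was derived from Theorem~\ref{Krylov} in Section~\ref{proof:Markov}, simply replacing the fixed time $s$ by an arbitrary non-negative stopping time $\tau$ throughout. Concretely, I would show that if $\mathscr C$ satisfies {\bf C1}--{\bf C3} and the stopping-time versions of {\bf MP1} and {\bf MP2}, then $\mathscr C$ is a \emph{strong} Krylov map in the sense of the extended Definition~\ref{def:Krylov}, i.e.\ it satisfies {\bf KP1} and {\bf KP2} with $s$ replaced by any positive stopping time $\tau$. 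Once that is established, Theorem~\ref{thm:strong-Krylov} produces a measurable selection $x\mapsto P_x\in\mathscr C(x)$ with $\theta_\tau P_x(\cdot)=\int P_{\pi_\tau(w)}(\cdot)\,P_x(dw)$ for every stopping time $\tau$, which is precisely the asserted strong Markov property once rewritten via regular conditional probabilities as in \eqref{eight}.

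The verification of {\bf KP1} for a stopping time proceeds as in Section~\ref{proof:Markov}: starting from the disintegration $P(A\cap\theta_\tau^{-1}(B))=\int_A P[\theta_\tau^{-1}(B)\,|\,\mathcal F_\tau](w)\,P(dw)$ for $A\in\mathcal F_\tau$, $B\in\mathcal F$, and taking $A=\Omega$, we get $\theta_\tau P(B)=\int P[\theta_\tau^{-1}(B)\,|\,\mathcal F_\tau](w)\,P(dw)$; assumption {\bf MP1} (stopping-time version) says $P[\theta_\tau^{-1}(\cdot)\,|\,\mathcal F_\tau](w)\in\mathscr C(\pi_\tau(w))$ for $P$-a.e.\ $w$, and Strassen's theorem applied with the support functions $h_{\pi_\tau(w)}$ then yields $\theta_\tau P\in K(P,\tau,\mathscr C)$. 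For {\bf KP2}, again as before, any $Q\in K(P,\tau,\mathscr C)$ arises via Strassen from a measurable selection $Q_\bullet$ of $w\mapsto\mathscr C(\pi_\tau(w))$; by the stopping-time version of {\bf MP2}, $P\otimes_\tau Q\in\mathscr C(x)$, and since $P\otimes_\tau Q$ agrees with $P$ on $\mathcal F_\tau$ we get $\zeta_n^\tau(P\otimes_\tau Q)=\zeta_n^\tau(P)$ for all $n$, while $\theta_\tau(P\otimes_\tau Q)(\cdot)=\int Q_w(\cdot)\,P(dw)\in K(P,\tau,\mathscr C)$. Hence $P\otimes_\tau Q\in\Gamma[P,x,\tau](\mathscr C)$ and $\theta_\tau\bigl(\Gamma[P,x,\tau](\mathscr C)\bigr)=K(P,\tau,\mathscr C)$, which is {\bf KP2}. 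Thus $\mathscr C$ is a strong Krylov map, and Theorem~\ref{thm:strong-Krylov} applies.

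The only genuinely nontrivial point — and the one I expect to be the main obstacle to a fully rigorous write-up — is that the machinery of Section~\ref{sec:abstrMark} (Lemmas~\ref{commute} and~\ref{reduct}, and the singleton argument of Step~2) must all be shown to survive the replacement of $s$ by a stopping time $\tau$. For Lemma~\ref{commute} and Lemma~\ref{reduct} this is essentially formal once one has the identities $\zeta_n(P)=\zeta_n^\tau(P)+e^{-\lambda_n\tau}\zeta_n(\theta_\tau P)$ and the measurability of $w\mapsto h_{\pi_\tau(w)}[f]$ with respect to $\mathcal F_\tau$ (which holds because $\pi_\tau$ is $\mathcal F_\tau$-measurable), together with the stopping-time version of $P\otimes_\tau Q$ referenced from \cite[Theorem 6.1.2]{S-V}. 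The singleton step is the subtlest: one must check that $\zeta_n(P_1)=\zeta_n(P_2)$ for all $n$ still forces $P_1=P_2$ on the generating sets \eqref{basic-sets}. But the $\zeta_n$ do not involve $\tau$ at all, so that part of the argument is literally unchanged; the stopping-time property {\bf KP1} is invoked only at fixed times $s$ inside that computation, which remains available since fixed times are stopping times. Consequently the proof is a careful but routine transcription, and I would present it at the level of detail the paper itself uses in Section~\ref{proof:Markov}, namely: state that {\bf MP1}$_\tau\Rightarrow${\bf KP1}$_\tau$ and {\bf MP2}$_\tau\Rightarrow${\bf KP2}$_\tau$ by the same arguments with $s\rightsquigarrow\tau$, invoke Theorem~\ref{thm:strong-Krylov}, and rewrite \eqref{seven} in conditional form to conclude.
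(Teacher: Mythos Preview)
Your proposal is correct and is exactly the approach the paper takes: the paper presents Theorem~\ref{thm:strong-Markov} as an immediate corollary of Theorem~\ref{thm:strong-Krylov}, the derivation being the argument of Section~\ref{proof:Markov} with the fixed time $s$ replaced by a stopping time $\tau$ throughout. Your additional remarks about the stopping-time versions of Lemmas~\ref{commute} and~\ref{reduct} and the singleton step really pertain to the proof of Theorem~\ref{thm:strong-Krylov} itself (which the paper also dispatches in one line, ``by repeating the arguments in Section~\ref{sec:abstrMark} with general stopping time''), not to the corollary, but your observations there are accurate as well.
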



\section{Examples}\label{sec:examples}

\subsection{ODEs and differential inclusions}

There is a large literature on ordinary differential equations
\be\label{eq:ode}
\frac{du}{dt} = f(u)
\ee
lacking uniqueness: for all or some initial conditions $x$, the set of corresponding solutions (the integral funnel) $S(x)$ is not a single trajectory.
Since the work of Kneser \cite{Kn} and Fukuhara (Hukuhara) \cite{Fu} it is known that, if  $f$ is a continuous map from $\R^d$ into itself, the cross-section of $S(x)$ at time $t$, i.e., the
set $S(t, x) = \{u(t, x): u\in S(x)\}$, is compact and connected.  Also, and this goes back to Aronszajn
\cite{Ar}, every chunk $S([0, T], x)$ of the funnel $S(x)$ is the intersection of a decreasing sequence of compact, contractible sets (and, in particular, compact). This information is essentially enough
to apply our Theorem~\ref{thm:main}.

Thus, assume that $f: \R^d\to \R^d$ is continuous and such that
for every initial condition $x\in\R^d$, every solution $u(t, x)$ exists on the time interval $[0, +\infty)$, and we have
\be\label{solns}
u(t, x) = x + \int_0^t f(u(s, x))\,ds\,.
\ee
The solutions $u(\cdot, x)$ starting at $x$ are viewed as elements of the space $\Omega$ of continuous maps from $[0, +\infty)$ to $\R^d$. This space we equip with the topology of compact convergence. It is separable and metrizable; one possible metric is given by formula \eqref{met-Om} with $\rho$ being the usual metric on $\R^d$.

Denote by $S(x)$ the set of all solutions starting at $x$, and denote by $S([0, T], x)$ the restrictions of solutions $u\in S(x)$ to the time interval $[0, T]$. If $K$ is a subset of $\R^d$, $S(K) = \cup_{x\in K}S(x)$, and $S([0, T], K)$ is defined similarly.
While $S(K)$ is a subset of $\Omega$,
$S([0, T], K)$  is a subset of $C([0, T]\to \R^d)$ with the usual topology of uniform convergence.
As was shown by Aronszajn in \cite{Ar}, if $K\subset \R^d$ is compact, the set $S([0, T], K)$  is compact as well. This implies that $S(K)$ is compact in $\Omega$ for every compact set $K\subset \R^d$, verifying property
{\bf S1} of Section~\ref{sec:semi} for the map $x\to{S}(x) = S(x)$. A simple calculation
\[
\begin{aligned}
u(s + t) &= x + \int_0^s f(u(r)) dr +  \int_s^{s+t} f(u(r)) dr \\
      & = x + \int_0^s f(u(r)) dr + \int_0^t f(u(s + r)) dr \\
      & = u(s) + \int_0^t f(u(s + r)) dr \,,
\end{aligned}
\]
verifies property {\bf S3}: $\theta_s u \in S(u(s))$. Also, if $v\in S(u(s))$, then $v(r) = u(s) + \int_0^r f(v(r^\prime))\,dr^\prime$. The spliced path
\[
u\bowtie_s v (t) =
\begin{cases}
u(t) & \text{when}\;t \le s\\
v(t - s) & \text{when}\;t \ge s
\end{cases}
\]
is a solution of \eqref{eq:ode} with the initial condition $x$ because, when $t\ge s$,
\[
\begin{aligned}
& x + \int_0^t f(u\bowtie_s v (r))\,dr = x + \int_0^s f(u(r))\,dr + \int_s^t f(v(r - s))\,dr =\\
& u(s) + \int_0^{t-s}f(v(r))\,dr = v(t - s) = u\bowtie_s v (t) \,.
\end{aligned}
\]
This verifies {\bf S4}. It remains to check measurability of the map $S$, i.e., property {\bf S2}.
\begin{lemma}
  The map $x \mapsto S(x)$ is measurable in the sense that for any closed set ${\cal K} \subset \Omega$ the set $$ S^{-}({\cal K}) = \{x \in \mathbb{R}^d : S(x) \cap {\cal K} \neq \varnothing \}$$ belongs to the Borel $\sigma$-algebra of $\mathbb{R}^d$.
\end{lemma}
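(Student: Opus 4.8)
The plan is to prove the slightly stronger statement that $S^{-}(\mathcal{K})$ is actually \emph{closed} in $\mathbb{R}^d$ (which a fortiori makes it Borel), relying only on two facts already in hand: that $S$ maps compact subsets of $\mathbb{R}^d$ to compact subsets of $\Omega$ (this is property {\bf S1}, verified above via Aronszajn's theorem), and that a limit in $\Omega$ of solutions of \eqref{eq:ode} issuing from convergent initial data is again a solution with the limiting initial datum.

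The steps, in order. First I would take a sequence $x_n\in S^{-}(\mathcal{K})$ with $x_n\to x$ in $\mathbb{R}^d$ and, for each $n$, pick a path $u_n\in S(x_n)\cap\mathcal{K}$. The point of the argument is that the set $\{x\}\cup\{x_n:n\ge 1\}$ is compact, hence contained in a single closed ball $\overline{B(0,k)}$, so every $u_n$ lies in $S(\overline{B(0,k)})$. By property {\bf S1} this set is compact in $\Omega$, so after passing to a subsequence we may assume $u_n\to u$ in $\Omega$, i.e. uniformly on each interval $[0,T]$.

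Next I would verify that $u\in S(x)\cap\mathcal{K}$. Membership in $\mathcal{K}$ is immediate since $\mathcal{K}$ is closed and each $u_n\in\mathcal{K}$. For membership in $S(x)$: on a fixed interval $[0,T]$ all the values $u_n(t)$, $t\in[0,T]$, together with $u([0,T])$, lie in a single compact subset of $\mathbb{R}^d$ (the image of the compact set $S([0,T],\overline{B(0,k)})$ under the evaluations $v\mapsto v(t)$, $t\in[0,T]$); on that compact set $f$ is uniformly continuous, so $f(u_n(\cdot))\to f(u(\cdot))$ uniformly on $[0,T]$, and passing to the limit in $u_n(t)=x_n+\int_0^t f(u_n(s))\,ds$ yields $u(t)=x+\int_0^t f(u(s))\,ds$ for all $t$. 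Thus $u$ is a global solution of \eqref{eq:ode} starting at $x$, so $u\in S(x)$, whence $x\in S^{-}(\mathcal{K})$ and $S^{-}(\mathcal{K})$ is closed.

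The one genuinely delicate point — and the reason for the reduction to a ball — is that neither $\mathbb{R}^d$ nor $\Omega$ is compact, so closedness of the graph of $S$ alone does not let one extract a convergent subsequence from $(u_n)$; it is the local compactness of $\mathbb{R}^d$ together with property {\bf S1} that supplies the needed compactness. Everything else (uniform continuity of $f$ on the relevant compact set, passing to the limit under the integral) is routine. If one prefers, the same reasoning shows that the restriction of $S$ to each closed ball is upper semicontinuous with compact values, and measurability of $S$ in the sense of {\bf S2} then follows from the standard fact that upper semicontinuous compact-valued maps satisfy that property.
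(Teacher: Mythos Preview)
Your argument is correct and essentially the same as the paper's: both prove $S^{-}(\mathcal{K})$ is closed by taking $x_n\to x$, choosing $u_n\in S(x_n)\cap\mathcal{K}$, using compactness of $S$ applied to the compact set $\{x_n\}\cup\{x\}$ (you enclose it in a ball, the paper uses it directly) to extract a convergent subsequence, and then passing to the limit in the integral equation. The only difference is that you spell out the passage to the limit via uniform continuity of $f$ on compacta, whereas the paper leaves that implicit.
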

\begin{proof}
  We will show that $S^{-}({\cal K})$ is closed in $\mathbb{R}^d$ (i.e., $S$ is upper semi-continuous).
  Consider a convergent sequence $x_n \to x$, where $x_n \in S^{-}({\cal K})$ and, therefore, $S(x_n) \cap {\cal K}$ is not empty. For every $n$, pick an element $u_n \in S(x_n) \cap {\cal K}$.
  Since the set $S(\{x_j\}_{j\ge 1}\cup \{x\})$ is a compact in $\Omega$, and all $u_n$ belong to this set, we can extract a subsequence $u_{n_k}$ convergent to some $u$ in $\Omega$.  Passing to the limit
  in
  \[
u_n(t) = x_n + \int_0^t f(u_n(s))ds\,,
\]
we see that $u \in S(x)$. Since $u_{n_k}\in {\cal K}$ and ${\cal K}$ is closed, $u\in {\cal K}$. This
shows that
$S^{-}({\cal K})$ is closed.
\end{proof}
 We have checked all the  properties {\bf S1 - S4} for the set-valued map $x \to S(x)$. By Theorem~\ref{thm:main}, there is a measurable selection ${\frak u} : x \to {\frak u}(\cdot, x)\in S(x)$
with the semigroup property.

\bigskip

Similar arguments should and do work for differential inclusions. Consider, for example, an inclusion of the form
\be\label{eq:inclu}
\frac{du}{dt} \in f(u)\,,
\ee
where now $f$ is a set-valued map from $\R^d$ to non-empty, closed, compact subsets of $\R^d$.
Assume that $f$ is upper semi-continuous.
By definition, $u(t, x)$ is a solution of \eqref{eq:inclu} corresponding to the initial condition $u(0, x) = x$, if $u(\cdot, x)$ is an absolutely continuous function such that $u(0, x) = x$ and
\[
u(t, x) = x + \int_0^t f(u(s,  x))\,ds\,.
\]
We have not found in the literature a general result of Aronszajn type for differential inclusions
without special growth assumptions on $f$. Thus, assume that $f$ satisfies a Nagumo-type growth condition: there exists a nondecreasing continuous function $\psi:[0, +\infty)\to (0, +\infty)$ such that
\be\label{ineq:nagumo}
\int^\infty \frac{dr}{\psi(r)} = \infty\,,
\ee
and
\be
|v| \le \psi(|u|)\,,\quad\text{for all}\; v\in f(u)\,.
\ee
Then it is known (see, e.g., \cite[Theorem 2.28]{DGO}) that for every $x$ there exists at least one
solution $u(t, x)$ of \eqref{eq:inclu} defined for all $t\ge 0$.
According to \cite[Theorems 3.11 and 3.13]{DGO} (see also \cite[Theorem 7.1]{Dei}), the funnels $S(x)$ are compact. Also, as in the differential equations case, the image $S(K)$ of a compact set $K$ is compact (see \cite[Corollary 7.2]{Dei}). This is enough to conclude that under the above assumptions
the differential inclusion \eqref{eq:inclu} has solutions $x\to {\mathfrak u}(\cdot, x)$ with the semi-group property.


\subsection{On uniqueness}\label{sec:uniq}

Equations \eqref{eq:ode} with discontinuous $f(u)$ can be also treated with Theorem~\ref{thm:main}.
(A systematic study of ODEs with discontinuous right hand sides was initiated by A.~F.~Filippov, and the basic reference is  \cite{Filippov}.)
Here is a simple example that shows that the semiflows are not in general unique. But in this case
we can find the unique maximizer for a particular separating family of functions $\Phi$.

Let $X = \R$ and let $H(u)$ be a discontinuous function such that $H(u) = 1$ for $u > 0$ and $H(u) = 0$ for $u \le 0$. Consider the Cauchy problem
\be\label{H}
\frac{du}{dt} = H(u)\,,\quad u(0) = a\,.
\ee
By a solution of \eqref{H} we understand an absolutely continuous function $u(\cdot, a)$ that satisfies
\[
u(t, a) = a + \int_0^t H(u(s,a))\,ds
\]
for all $t\ge 0$. It is easy to find all
the solutions. They are
\[
u(t, a) = \begin{cases}
a + t & \text{if}\quad a > 0, \\
a & \text{if}\quad a < 0,
\end{cases}
\]
and the only non-uniqueness occurs when $a = 0$. The integral funnel $S(0)$ is formed by the paths
\[
v_c(t) = \begin{cases}
0 & \text{if}\quad t \le c ,\\
t - c & \text{if}\quad t \ge c\,
\end{cases}
\]
where $c$ runs through $[0, +\infty]$
with $v_0(t) = t$ and $v_\infty(t) = 0$ for $t\ge 0$.
It is easy to see that there are two solutions, $u_0(t, a)$ and $u_\infty(t, a)$, with the semigroup property: both solutions equal $u(t, a)$ when $a\neq 0$, but
$u_0(t, 0) = v_0(t)$ and $u_\infty(t, 0) = 0$.


Consider the following countable family, $\Phi$, of continuous functions $\vp : \R\to [0, 1]$ that separate
points of $X = \R$:
\be\label{vp-2}
\vp_{y}(x) = |x - y| \wedge 1\,,
\ee
where $y$ runs through nonzero rationals. If $0 < y < 1$, then
\[
\zeta(v_c(\cdot)) = \int_0^{+\infty} e^{-\lambda t}\,\vp_{y}(v_c(t))\,dt =
\frac{y}{\lambda} + \frac{ - 1 + 2 e^{\lambda} - e^{\lambda (1 + y)} }{\lambda ^2}\,\exp(-(c+y+1)\lambda)
\]
The expression $- 1 + 2 e^{\lambda} - e^{\lambda (1 + y)}$
is positive when $0 < \lambda \le 1$ and $0 < y < 0.489$. Thus, for such $\lambda$ and $y$, the maximum value of $\zeta(v_c)$ is attained when $c = 0$, which corresponds to $u_0(t, 0)$, and
\[
\zeta(u_0) = \frac{y}{\lambda} + \frac{ - 1 + 2 e^{\lambda} - e^{\lambda (1 + y)} }{\lambda ^2}\,\exp(-(y+1)\lambda)\,.
\]
At the same time,
\[
\zeta(u_\infty) = \int_0^{+\infty} e^{-\lambda t}\,\vp_{y}(u_\infty(t,0))\,dt = \frac{y}{\lambda}\,,
\]
and this value is smaller than $\zeta(u_0)$. Thus, if the enumeration of $\Lambda\times\Phi$ starts from small $\lambda $ and $y < 0.489$,
we pick $u_0$ as the maximizer.

However, if $0.6 < y < 1$ and $0.7 < \lambda \le 1$, the expression $- 1 + 2 e^{\lambda} - e^{\lambda (1 + y)}$ is negative. Therefore, had we started the enumeration of $\Lambda\times\Phi$ with such values of
$y$ and $\lambda$, we would have picked $u_\infty$ as the maximizer.


\subsection{PDEs}\label{sec:NS}

In the PDE setting our approach to selections with the semi-group property applies
to a large number of different equations.
We will illustrate it with the three-dimensional Navier-Stokes  equations, where uniqueness of the Hopf solutions is not known.
However, we could have chosen semilinear Schr\"odinger or wave equations, or any number of other equations or inclusions.
The following analysis relies on Theorem~\ref{thm:conditional-and-inductive}, a generalization of Theorem~\ref{thm:main}.

Consider the initial boundary value problem for the Navier-Stokes system
\be\label{N-S}
\begin{aligned}
& v_t + v\cdot \nabla v - \nu\,\Delta v + \nabla p = f\\
& \nabla\cdot v = 0
\end{aligned}
\ee
in a bounded domain $D\subset \R^3$ with smooth boundary, and the boundary conditions
\be\label{bc}
v = 0\quad\text{on}\; \partial D\,.
\ee
The exterior force $f$ is assumed to depend only on $x$.
For details on the functional set-up and the Hopf solutions see \cite{Lad} and \cite{Lad-Ver}.
We need the following function spaces.
\begin{enumerate}
\item[--\ ] ${J}_0^\infty (D)$ is the linear set of all
divergence-free $C^\infty$-smooth vectorfields
with compact supports in $D$;
\item[--\ ] ${J}_0(D)$ is the closure of the set ${J}_0^\infty (D)$ in $L^2(D)$ with the norm
$$ \|v\| = \left(\int_D |v(x)|^2\,dx \right)^{\frac12}.$$
The Hilbert space structure is inherited by ${J}_0(D)$ from $L^2(D)$.
\item[--\ ] ${J}_0^1(D)$ is the closure of the set ${J}_0^\infty (D)$ in the Dirichlet norm
\[
\|v\|_{(1)} = \left(\int_D |v_x(x)|^2\,dx\right)^{\frac12} \,.
\]
${J}_0^1(D)$ is compactly embedded in ${J}_0(D)$\,.
\end{enumerate}
Assume that $f\in L^2(D)$. We are interested in weak solutions of equations  \eqref{N-S}
(satisfying the boundary condition \eqref{bc}) corresponding to the initial conditions
$v_0\in J_0(D)$.

\begin{definition}\label{def:weak}
  A measurable vector field $v: \Dt \to \mathbb{R}^3$ is said to be a \textit{Leray-Hopf weak solution} of the Navier-Stokes equations \eqref{N-S}, \eqref{bc} with the initial condition $v_0 \in J_0(D)$ if

  \begin{enumerate}

  \item[(i)]
    $v(t)$ takes values in $J_0(D)$ for every $t \in [0, +\infty)$ with $v(0) = v_0$, and $t \mapsto v(t)$ is continuous with respect to the weak topology of $J_0(D)$;

  \item[(ii)]
    the $L^2$ norm of $v$ is uniformly bounded in time, and the $L^2$ norm of the first spatial derivative is locally square-integrable in time, i.e. $$v \in L^\infty([0, +\infty) \to J_0(D))\,\cap\,L^2_{loc}([0,\infty) \to J_0^1(D))\,;$$

  \item[(iii)]
    $v$ satisfies the integral identity
    \begin{equation}
      \label{int-ident}
      \int_0^\infty (-v, \eta_t) + ((v \cdot \nabla)\,v, \eta) + \nu\,(\nabla v, \nabla\eta) \,dt = \\
      (v_0, \eta(0)) + \int_0^\infty (f, \eta(t)) \,dt
    \end{equation}
    for every test function $\eta$ with compact support in $[0, +\infty)\times D$ and such that $\eta\in L^2_{loc}([0, +\infty)\to J_0^1(D))$ and $\eta_t\in L^2_{loc}([0, +\infty)\to J_0(D))$;

  \item[(iv)]
    $v$ satisfies the strong energy inequality \label{item:energy-inequality}
    \begin{subequations}
      \label{eq:energy-inequality}
      \begin{equation}
        \norm{v(t + T)} \leq \norm{v(t)} \wedge \frac{1}{\nu \lambda_1} \norm{f}
      \end{equation}
      \begin{equation}
        \nu \int_t^{t+T} \norm{\nabla v(s)}^2 ds \leq \frac12 \norm{v(t)}^2 + T \norm{f} \left(\norm{v(t)} \wedge \frac{1}{\nu \lambda_1} \norm{f} \right)
      \end{equation}
    \end{subequations}
    for almost every $t \in [0, +\infty)$ including $t=0$, and every $T \in [0, +\infty)$, where $\lambda_1$ is the first eigenvalue of the Stokes operator in $D$ with the Dirichlet boundary condition.
  \end{enumerate}
\end{definition}

\noindent In this definition and henceforth, $v(t)$ stands for $v(t, \cdot)$.

\begin{remark}
  The usual energy inequality is different from~(\ref{eq:energy-inequality}).
  As a priori estimates go, equation \eqref{N-S} implies, first,
  \be\label{pre-energy}
  \frac{1}{2}\,\frac{d\hfil}{dt} \|v(t)\|^2 + \nu\,\|\nabla v(t)\|^2 \le \|f\|\,\|v(t)\|\,.
  \ee
  Then, with the help of the Poincar\'e inequality
  \[
    \lambda_1\,\|v\|^2 \le \|\nabla v\|^2\,,
  \]
  proceed to
  \[
    \|v(t)\|\,\frac{d\hfil}{dt} \|v(t)\| + \nu\,\lambda_1\,\|v(t)\|^2 \le \|f\|\,\|v(t)\|
  \]
  and
  \[
    \frac{d\hfil}{dt} \|v(t)\| + \nu\,\lambda_1\,\|v(t)\| \le \|f\|\,.
  \]
  This integrates  to
  \[
    \|v(t+T)\|\le e^{- \nu \lambda_1\,T}\,\|v(t)\| + \frac{1 - e^{- \nu \lambda_1\,T}}{\nu\lambda_1}\,\|f\|\,,
  \]
  for a.e. $t \in [0, +\infty)$ and every $T \in [0, +\infty)$.
  The bound on $\norm{v(t)}$ in \eqref{eq:energy-inequality} follows. In view of \eqref{pre-energy}, boundedness of the $L^2$-norm $\|v(t)\|$ implies property (ii) in Definition~\ref{def:weak}.
  Also, integration of \eqref{pre-energy} in time from $t$ to $t + T$ leads to the bound on the Dirichlet integral of $v$ in \eqref{eq:energy-inequality}.
\end{remark}

\begin{remark}
  The strong energy inequality was first proven by Leray in \cite{Leray} for $D =  \mathbb{R}^3$, and it was not discussed by Hopf in \cite{Hopf}. In \cite{Lad}, Ladyzhenskaya included the strong energy inequality to Hopf's solutions.

  For our applications, the existence of the times where the energy inequality is not valid is the raison d'\^etre for the generalization in Section~\ref{sec:conditional-shifts}.
\end{remark}

We are going to show how the weak solutions of the Navier-Stokes equations fit into the setting of Theorem~\ref{thm:conditional-and-inductive}.
The spaces $X^i$ will be the closed balls $$X^i = \left\{u\in J_0(D): \norm{u} \leq i \right\}$$ with the weak topology of $L^2$, and $X$ will be the inductive limit of the spaces $X^i$.
As in Section~\ref{sec:inductive-limit}, $\Omega^i$ is the space $C([0,+\infty)\to X^i)$ of continuous paths in $X^i$ with the compact-open topology, and $\Omega = \varinjlim \Omega^i$.

Existence of a (global in time) weak solution (when $f\in L^2$) for every $v_0\in J_0(D)$ is well known, see \cite[Chap. 6, Sec. 6]{Lad}.
Denote by $S(v_0)$ the set of all Leray-Hopf weak solutions corresponding to the initial condition $v_0$.
Lemma~1 of \cite{Lad-Ver} proves that $S(v_0)$ is a compact subset of $\Omega$.
In fact, the energy inequality~(\ref{eq:energy-inequality}) shows that if $v_0\in X^i$, then $S(v_0)\subset \Omega^{k_i}$ (so, $k_i = i \wedge \frac{1}{\nu \lambda_1}\norm{f}$ here).
To prove measurability of the set-valued map $S$ it is sufficient to show that if $C$ is a closed set in $\Omega^{k_i}$, then the set $$F = \{u\in X^i: S(u)\cap C \neq\emptyset\}$$ is Borel in $X^i$.
We will show that $F$ is closed in $X^i$.
Assume $u_n(0)\in F$ and $u_n(0)\rightharpoonup u_\infty$ (weak convergence in $J_0(D)$); in particular, $u_\infty\in X^i$.
Let $u_n(t)$, $t\ge 0$, be the corresponding weak solutions that belong to $C$.
The energy inequalities~(\ref{eq:energy-inequality}) satisfied by $u_n(\cdot)$ show that $u_n(\cdot)$ are uniformly bounded in $L^\infty([0, +\infty)\to J_0(D))$ and $\nabla u_n(\cdot)$ have uniform bounds in $L^2([0, \ell]\times D)$, for every integer $\ell \ge 1$.
In addition, as follows from \eqref{eq:energy-inequality} and the Ladyzhenskaya's $L^4$-inequality, the integrals $$\int_0^\ell \|u_n(t)\|_{L^4}^{8/3}\,dt$$ are uniformly bounded for every $\ell$.
Thus, there exists $u\in L^\infty([0, +\infty)\to J_0(D))$ such that $\nabla u \in L^2([0, \ell]\times D)$, and $u\in L^{8/3}([0, \ell]\to L^4(D))$, for every integer $\ell \ge 1$, and there exists a subsequence $n_k$ such that $u_{n_k}(\cdot)$ converges to $u(\cdot)$ $\text{weak-}*$ in $L^\infty([0, +\infty)\to J_0(D))$ and $\nabla u_{n_k}(\cdot)$ converges weakly in $L^2([0, \ell]\times D)$ to $\nabla u(\cdot)$.
The argument in the proof of \cite[Sec. 6.6, Theorem 13]{Lad} shows that we can (and will) assume that $u_{n_k}(\cdot)$ converges strongly in $L^2([0, \ell]\times D)$ to $u(\cdot)$.
This allows one to pass to the limit in the corresponding integral identity \eqref{int-ident} and verify that $u(\cdot)$ satisfies \eqref{int-ident} with the initial condition $u_\infty$.
Clearly, inequalities \eqref{eq:energy-inequality} hold for $u(\cdot)$.
The argument in the proof of Lemma~1 in~\cite{Lad-Ver} (see formula~(9)) shows that
$u(t)$ is weakly continuous in $J_0(D)$.  Thus, $u(\cdot)\in S(u_\infty)$.

The subsequence $u_{n_k}(\cdot)$ of weak solutions converges in the space of paths $\Omega^{k_i}$ with its compact-open topology. Indeed, for every $\xi\in J_0^1(D)$, the family of scalar functions
$(u_{n_k}(\cdot), \xi)$ is uniformly bounded and equicontinuous (formula~(9) of~\cite{Lad-Ver}) on every time interval $[0, \ell]$. Clearly, the limit of $(u_{n_k}(\cdot), \xi)$ is $(u(\cdot), \xi)$.
Since $u_{n_k}(\cdot)\in C$ and $C$ is closed, $u(\cdot)\in C$. This completes the proof of measurability of $S$.

Next, we verify property {\bf S3a}.
Let $v(\cdot)$ be a weak solution corresponding to the initial condition $v_0$ such that $v(t_0 + \cdot) \in S(v(t_0))$ , i.e. the energy inequality~(\ref{eq:energy-inequality}) is valid for $t_0$, and let $w(\cdot)$ be a weak solution corresponding to the initial condition $v(t_0)$.
We have to show that the function
\begin{equation*}
  \spliced{v}{t_0}{w}(t) = \begin{cases}
    v(t) & \text{when}\; 0 \le t \le t_0 \\
    w(t-t_0) & \text{when}\; t > t_0
  \end{cases}
\end{equation*}
is a weak solution corresponding to the initial condition $v_0$.
The only problematic issue is to check the integral identity
\begin{multline}\label{vxw}
  \int_0^{t_0} (-v, \eta_t) + ((v\cdot \nabla)\,v, \eta) + \nu\,(\nabla v, \nabla\eta)\,dt \, + \\ \int_{t_0}^\infty (- {\tilde w}, \eta_t) + (({\tilde w}\cdot \nabla)\,{\tilde w}, \eta) + \nu\,(\nabla {\tilde w}, \nabla\eta) \,dt = \\ (v_0, \eta(0)) + \int_0^\infty (f, \eta(t)) \,dt\,,
\end{multline}
where ${\tilde w}(t) = w(t - t_0)$.

Let $\vartheta(t)$ be a piece-wise linear scalar function equal to $0$ when $t\le 0$, equal to $t$ when $0 < t \le 1$, and equal to $1$ when $t > 1$.
We know that
\begin{equation}
  \label{w}
  \int_{t_0}^\infty (- {\tilde w}, \eta_t) + (({\tilde w}\cdot \nabla)\,{\tilde w}, \eta) + \nu\,(\nabla {\tilde w}, \nabla\eta) \,dt = (v(t_0), \eta(t_0)) + \int_{t_0}^\infty (f, \eta(t)) \,dt\,.  
\end{equation}
On the other hand, if we replace $\eta$ in \eqref{int-ident} with $\eta_\epsilon(t, x) = (1 - \vartheta((t-s)/\epsilon)\,\eta(t, x)$, evaluate $$\int_0^\infty (- v , (\eta_\epsilon)_t) \, dt = \int_0^{t_0+\epsilon}  (1 - \vartheta((t - t_0)/\epsilon))\;(-v, \eta_t) \,dt + \frac{1}{\epsilon}\;\int_{t_0}^{t_0+\epsilon} (v, \eta)\,dt\,,$$
and pass to the limit as $\epsilon\to 0$, we obtain $$\int_0^{t_0} (- v, \eta_t) + ((v\cdot \nabla)\,v , \eta) + \nu\,(\nabla v,\nabla\eta)\,dt + (v(t_0), \eta(t_0)) = (v_0,\eta(0)) + \int_0^{t_0} (f, \eta(t))\,dt\,.$$
Add this equality to \eqref{w} and obtain \eqref{vxw}.
Thus, all the conditions of Theorem~\ref{thm:conditional-and-inductive} are satisfied and we conclude that there is a measurable selection of Leray-Hopf solutions with the semigroup property $$\U(t + T, v_0) = \U(T, \U(t, v_0))$$ for a.e. $t \in [0, +\infty)$ and every $T \in [0, +\infty)$.

\begin{theorem}
For the three-dimensional Navier-Stokes system in a bounded domain there exists a measurable
selection of Hopf solutions with the semigroup property.
\end{theorem}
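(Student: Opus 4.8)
The plan is to read the statement as an instance of the generalized semiflow selection theorem, Theorem~\ref{thm:main-2}, applied to the set-valued map
\[
S:\ v_0\ \longmapsto\ S(v_0)=\{\text{all Hopf (weak) solutions of \eqref{N-S}, \eqref{bc} issuing from } v_0\}.
\]
So the proof amounts to building the inductive system that Theorem~\ref{thm:main-2} requires and then checking, one at a time, that $S$ has the four properties {\bf S1}--{\bf S4} and the auxiliary condition $S(X^i)\subset\Omega^{k_i}$; once this is done the conclusion is immediate from Theorem~\ref{thm:main-2}.

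First I would fix the ambient spaces. Take $X^i=\{u\in J_0(D):\ \|u\|\le[\,i\vee\tfrac{1}{\nu\lambda_1}\|f\|\,]\}$ equipped with the weak topology of $L^2$; since $J_0(D)$ is a separable Hilbert space, each $X^i$ is weakly compact (Banach--Alaoglu) and weakly metrizable, hence Polish, and the inclusions $X^i\hookrightarrow X^{i+1}$ are closed homeomorphic embeddings, so $X=\varinjlim X^i$ makes sense as in Section~\ref{gener}. A countable, bounded, point-separating family $\Phi^i\subset C_b(X^i)$ is furnished by $u\mapsto\arctan\langle u,\xi_k\rangle$ for a countable dense set $\{\xi_k\}\subset J_0^1(D)$ (dense in $J_0(D)$), these being weakly continuous on $X^i$. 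Put $\Omega^i=C([0,+\infty)\to X^i)$ and $\Omega=\varinjlim\Omega^i$. The energy inequality \eqref{en-ineq} (with $R=i$) shows that a Hopf solution starting in $X^i$ never leaves $X^i$, so $S(X^i)\subset\Omega^i$ and one may take $k_i=i$.

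Next I would verify {\bf S1}, {\bf S3}, {\bf S4}. Compactness of $S(v_0)$ in $\Omega$ (property {\bf S1}) is Lemma~1 of \cite{Lad-Ver}. For {\bf S3}, that $\theta_s v\in S(v(s))$ whenever $v\in S(v_0)$, I would test the integral identity \eqref{int-ident} against the time-localized functions $\vartheta((t-s)/\epsilon)\,\eta(t,x)$ and let $\epsilon\to 0$, the boundary contribution at $t=s$ being controlled by the weak-in-time continuity of $v$; the energy inequalities for $\theta_s v$ are inherited trivially. For {\bf S4}, that the splice $v\bowtie_s w$ of a Hopf solution $v$ with a Hopf solution $w$ from $v(s)$ is again a Hopf solution from $v_0$, I would write \eqref{int-ident} for $v$ cut off by $1-\vartheta((t-s)/\epsilon)$, add the identity for $\widetilde w(t)=w(t-s)$ on $[s,\infty)$, and let $\epsilon\to0$ so that the two boundary terms at $t=s$ cancel; again the energy inequalities hold because they hold on each piece.

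The heart of the proof — and the step I expect to be the main obstacle — is property {\bf S2}, measurability of $S$, which I would obtain by proving that $S$ is upper semicontinuous: $S^-(C)=\{v_0\in X^i:\ S(v_0)\cap C\ne\emptyset\}$ is closed in $X^i$ for every closed $C\subset\Omega^{k_i}$ (this suffices, by the reduction to each $X^i$ carried out in the proof of Theorem~\ref{thm:main-2} in Section~\ref{gener}). Given $v_0^{(n)}\rightharpoonup v_0^{(\infty)}$ in $X^i$ with solutions $u_n\in S(v_0^{(n)})\cap C$, the uniform energy bounds \eqref{en-ineq} together with the Ladyzhenskaya $L^4$-inequality give uniform control of $u_n$ in $L^\infty_t J_0(D)$, of $\nabla u_n$ in $L^2([0,\ell]\times D)$, and of $u_n$ in $L^{8/3}([0,\ell]\to L^4(D))$; the delicate point is the Aubin--Lions-type compactness that upgrades a weakly-convergent subsequence to one converging \emph{strongly} in $L^2_{loc}([0,+\infty)\times D)$ — this is the argument of \cite[Sec.~6.6, Theorem~13]{Lad} — which is exactly what lets one pass to the limit in the nonlinear term $(u_n\cdot\nabla)u_n$ and identify the limit $u$ as a weak solution with datum $v_0^{(\infty)}$ satisfying \eqref{en-ineq}. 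Weak-in-time continuity of $u$ (so that $u\in\Omega^{k_i}$), and convergence $u_n\to u$ in the compact-open topology of $\Omega^{k_i}$, both follow from the equicontinuity estimate (formula (9) of \cite{Lad-Ver}); since $u_n\in C$ and $C$ is closed, $u\in C$, so $v_0^{(\infty)}\in S^-(C)$. With {\bf S1}--{\bf S4} and $S(X^i)\subset\Omega^i$ established, Theorem~\ref{thm:main-2} delivers a measurable selection of Hopf solutions with the semigroup property, which is the assertion.
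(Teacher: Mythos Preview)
Your proposal is correct and follows essentially the same route as the paper: you set up the same inductive system $X^i\subset X$, invoke Lemma~1 of \cite{Lad-Ver} for {\bf S1}, verify {\bf S3} and {\bf S4} by the same $\vartheta((t-s)/\epsilon)$ cut-off trick, and prove {\bf S2} by the same upper-semicontinuity argument (energy bounds, strong $L^2$ compactness via \cite[Sec.~6.6, Theorem~13]{Lad}, equicontinuity from formula (9) of \cite{Lad-Ver}), then apply Theorem~\ref{thm:main-2}. The only addition is your explicit choice of the separating family $\Phi^i$, which the paper leaves implicit.
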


\begin{remark}
  In the case of the compressible Navier-Stokes system, a selection with the semigroup property was obtained by D. Basari\'c in \cite{Basaric} using a modification of our approach in Section~\ref{sec:basic-case}.
\end{remark}

We recapitulate: the same conclusion regarding weak solutions with the semigroup property is valid for a large class of problems including but not limited to the boundary initial value problem for the semilinear wave, heat, and Schr\"odinger equations.


\end{document}